\theoremstyle{plain}
\newtheorem{theorem}{Theorem}[section]
\newtheorem{lemma}[theorem]{Lemma}
\newtheorem{proposition}[theorem]{Proposition}
\theoremstyle{definition}
\newtheorem{definition}[theorem]{Definition}
\newtheorem{remark}[theorem]{Remark}
\newcommand{\di}{\mathrm{d}}
\begin{document}

\title{Poisson Cohomology of Broken Lefschetz Fibrations}

\author{Panagiotis Batakidis}

\address{Department of Mathematics 
\\
Aristotle University of Thessaloniki
\\
 Thessaloniki 54124, Greece}

\email{batakidis@math.auth.gr, batakidis@gmail.com}

\author{Ram{\'o}n Vera}

\address{Department of Mathematics, KU Leuven, Celestijnenlaan 200B, Leuven 
B-3001, Belgium}

\email{ramon.vera@kuleuven.be, rvera.math@gmail.com}

\begin{abstract}
We compute the formal Poisson cohomology of a broken Lefschetz fibration by calculating it at fold and Lefschetz singularities. Near a fold singularity the computation reduces to that for a point singularity in 3 dimensions. For the Poisson cohomology around singular points we adapt techniques developed for the Sklyanin algebra.  As a side result, we give compact formulas for the Poisson coboundary operator of an arbitrary Jacobian Poisson structure in 4 dimensions.

\end{abstract}

\subjclass[2010]{Primary: 53D17, 17B63. Secondary: 16E45, 17B56, 57M50.}
\keywords{Broken Lefschetz fibrations, Poisson cohomology, isolated singularities, graded vector fields.}

\maketitle

\vspace{-1cm}

\section{Introduction}

Poisson geometry originated in the Hamiltonian formalism of classical mechanics, and plays an important role in the passage to quantum mechanics. A primary tool in studying invariants of Poisson manifolds is Poisson cohomology. It reveals important features of the geometry of a Poisson manifold such as the modular class, obstructions to deformations, normal forms and deformation quantization.  However, calculating Poisson cohomology can be very difficult.  The cohomology spaces are generally infinite-dimensional, and they are unknown for many Poisson structures as there is no general method for their computation. One has better chances when restricting to formal coefficients for the cochain complex of multivector fields.

For weight homogeneous Poisson algebras $\mathcal{A}=\mathbb{R}[[x_1,x_2,x_3]]$ in 3 variables, Pichereau \cite{P06} computed the Poisson (co)homology with formal coefficients under the assumption that the structure is determined by a weight homogeneous polynomial $\phi$ with isolated singularity. Note that in this case the quotient space $\mathcal{A}/\langle \partial_1\phi,\partial_2\phi,\partial_3\phi\rangle$ has finite Milnor number. Following this work, Pelap \cite{P09} gave formulas for the formal Poisson (co)homology of the Sklyanin algebra. The structure there is unimodular and weight homogeneous, with two weight homogeneous Casimirs forming a complete intersection with isolated singularity.  In dimension 4, Hong and Xu computed the Poisson cohomology of del Pezzo surfaces \cite{HX11}. Under some conditions, Monnier \cite{M02}  computed the formal Poisson cohomology of quadratic structures.

Broken Lefschetz fibrations (bLfs) originated as a generalization of Lefschetz pencils \cite{D99, ADK05}, and in recent years they have found diverse applications in low-dimensional topology, symplectic geometry, and singularity theory \cite{BH15, GK15, In16, Wi11}.  A bLf is a map from a 4--manifold $M$ to the 2-sphere, with a singularity set consisting of a finite collection of circles which can be assumed to be disjoint, called \textsl{fold singularities}, and a finite set of isolated points, also known as \textsl{Lefschetz singularities} (see Definition \ref{D:BLF}).  In this paper we determine the formal Poisson cohomology of a Poisson structure associated to a bLf. 

As shown in \cite{GSV}, on a bLf there is an associated Poisson structure $\pi$ whose degeneracy locus coincides with the singularity set of the fibration. Due to the existence of bLfs on 4--manifolds, such a Poisson structure $\pi$ exists on any homotopy class of maps from a 4-manifold $M$ to $S^2$, thus making their Poisson cohomology an interesting feature in terms of different classification questions. 

Around each type of singularity on a bLf, we use a different method for the computation of Poisson cohomology. This is due to the different model of the Poisson structure around each type (see equations  (\ref{eq:pi-gamma-with-k}), (\ref{eq:pi-C})). We explain our approach and connections to the works of Pelap \cite{P09} and Pichereau \cite{P06} in the next two paragraphs.

For the Poisson cohomology around Lefschetz singularities we use most of the 4-dimensional calculus for homology developed in Pelap \cite{P09}. This can be used here with amendments related to the Poisson structure around Lefschetz points. In particular, we show that to build compact formulas for the coboundary operator one needs a specific Clifford rotation $D$ of $\mathbb{R}^4$ that fixes the singularity (Remark \ref{fixD}), and an endomorphism $K$ of $\mathfrak{so}(4)$ that has a natural relation to $D$ (Remark \ref{fixK}) and is directly related to Jacobian Poisson structures (Remark \ref{Knabla}). These operators do not appear in \cite{P09} since we first obtain formulas for the Poisson coboundary operator for any Jacobian Poisson structure in 4 dimensions in Proposition \ref{compact}, while in 
\cite{P09} the focus was on an example with isolated singularity. These compact formulas are considerably different than those for homology in \cite{P09}, and cannot be extracted without $D$ and $K$. Having identified $D$ and $K$, then we restrict to the Poisson cohomology of Lefschetz singularities (Proposition \ref{compact-constant}). Similarly to  Pelap's work, our model has two weight homogeneous Casimirs forming a complete intersection with isolated singularity. Due to unimodularity of Jacobian Poisson structures, if follows that the cohomology spaces we want to compute here, should have the same rank as the Poisson homology spaces of the Sklyanin algebra. Computing the generators explicitly  is the more difficult part, and this is done in Propositions \ref{H0} - \ref{H2}.

For the Poisson cohomology around folds, our computation shows that in terms of cohomology, the singular circles can be viewed essentially as point singularities of a 3-manifold. In particular, the proof of Proposition \ref{cohomology circles} shows that one can decompose the coboundary operator in a way that isolates a point singularity in a 3-manifold and then adds an extra dimension in both the manifold and the singularity to get the fold singularity (circle) of a bLf. Restricting the Poisson structure on this 3-manifold, we get a Poisson structure determined by a weight homogeneous polynomial with isolated singularity as in Pichereau\cite{P06}. The weight homogenous polynomial that we use is naturally one of the Casimirs of the original Poisson structure. As a corollary, one could say that the computation of Poisson cohomology for fold singularities, can be thought of as an example of transferring the results in \cite{P06} to dimension 4. 
 The main result of the paper is the following.

\begin{theorem}\label{thm-main}
Let $f\colon M \rightarrow S^2$ be a broken Lefschetz fibration on an oriented, smooth, closed 4-manifold $M$. Denote by $\pi \in \mathfrak{X}^2(M)$ the associated Poisson structure vanishing on a finite collection of disjoint circles $\Gamma = \lbrace \gamma_1, \dots , \gamma_m\rbrace$ and a finite collection of points $C= \lbrace p_1, \dots , p_r \rbrace$.  The formal Poisson cohomology of $(M,\pi)$ on the tubular neighbourhood $U_\Gamma$ is determined by the following free $\mathrm{Cas}$- modules
\vspace{-0.3cm}

\begin{equation*}
\begin{split}
H^{0} (U_{\Gamma}, \pi) &\cong
\mathbb{R}^m
\\
H^{1} (U_{\Gamma}, \pi) &\cong \mathbb{R}^m\cong\bigoplus_{i=1}^m \frac{\partial}{\partial \theta_{i} }
\\
H^{2} (U_{\Gamma}, \pi) &\cong  0
\\
H^{3} (U_{\Gamma}, \pi) &\cong \mathbb{R}^m\cong \bigoplus_{i=1}^m \frac{\partial}{\partial x_{1_i}} \wedge  \frac{\partial}{\partial x_{2_i}} \wedge \frac{\partial}{\partial x_{3_i}}
\\
H^{4} (U_{\Gamma}, \pi) & \cong\mathbb{R}^m\cong  \bigoplus_{i=1}^m  \mathrm{vol}_i \quad ,
\end{split}
\end{equation*}
\vspace{-0.5cm}

\noindent where
\vspace{-0.1cm}

\begin{itemize}

\item $\mathrm{Cas}$ denotes the algebra $\mathbb{R}[Q^1_i, Q^2_i]$ of Casimirs with $Q^1_i=\theta_i, Q^2_i=-x_{1_i}^2+x_{2_i}^2+x_{3_i}^2$ the representatives around each $\gamma_i$, 
\item $\theta_i$ is the parameter of each circle $\gamma_i$ in $\Gamma$ with normal coordinates $(x_{1_i}, x_{2_i}, x_{3_i})$, 
\item $\mathrm{vol}_i$ is the volume form $d\theta\wedge dx_{1_i}\wedge dx_{2_i}\wedge dx_{3_i}$ around each $\gamma_i$.
\end{itemize}
Around $C$ the formal Poisson cohomology is determined by the following free $\mathrm{Cas}$- modules

\begin{equation*}
\begin{split}
H^{0} (U_{C}, \pi) &
\cong
\mathbb{R}^r
\\
H^{1} (U_{C}, \pi) &\cong
\mathbb{R}^r\cong  \bigoplus_{l=1}^r  E_l
\\
H^{2} (U_{C}, \pi) &\cong
\mathbb{R}^{6r}\cong \bigoplus_{l=1}^r\left( \left[\bigoplus_{k=1}^5K^{-1}(\nabla\nu_k\times\nabla P^1_l)\right] \oplus K^{-1}(\nabla P^1_l\times\nabla P^2_l) \right)
\\
H^{3} (U_{C}, \pi) &\cong\mathbb{R}^{13r}\cong \bigoplus_{l=1}^r\left( \left[ \bigoplus_{k=1}^{5} D \left( \nabla \nu_k \right)\right]  \oplus \left[ \bigoplus_{k=0}^{5} \nu_k D \left(\nabla P^2_l \right)\right] \oplus D(\nabla P^1_l)   
\oplus x_{1_l}x_{2_l}  \frac{}{} D(\nabla P^1_l)\right)
\\
H^{4} (U_{C}, \pi) & \cong\mathbb{R}^{7r}\cong \bigoplus_{l=1}^r\mathrm{span} \langle 1, \nu_1, \nu_2, \nu_3, \nu_4, \nu_5, \nu_6 \rangle  \quad , 
\end{split}
\end{equation*}

\noindent where 
\begin{itemize}
\item $\mathrm{Cas}$ denotes the algebra of Casimirs with $P^1_l=x_{1_l}^2-x_{2_l}^2+x_{3_l}^2-x_{4_l}^2, P^2_l=2(x_{1_l}x_{2_l}+x_{3_l}x_{4_l})$ the representatives around each point in $C$, 

\item $E=\sum_{i=1}^4x_i\partial_i$ is the Euler vector field in coordinates $(x_{1_l},x_{2_l},x_{3_l},x_{4_l})$, around each $p_l$,

\item $(\nu_k)_{0\leq k\leq 6}=(1, x_{1_l}, x_{2_l}, x_{3_l}, x_{4_l}, x_{1_l}x_{2_l}, x_{3_l} x_{4_l})$.

\end{itemize}
\end{theorem}

The Poisson structure on a bLf, together with the Poisson structure on near-symplectic 4-manifolds studied in \cite{BV} and log-symplectic structures on 4-manifolds, are examples of singular Poisson structures for any possible combination of degeneracies in the rank of a Poisson structure. The Poisson bivector associated to a bLF is of rank 2 or 0, whereas on a 4-manifold log-symplectic structures are Poisson structures of rank 4 or 2, and near-symplectic manifolds have a Poisson bivector of rank 4 or 0.  Hence, together with the Poisson cohomology of near-symplectic manifolds \cite{BV} and the Poisson cohomology of log-symplectic manifolds \cite{GMP,MO14i}, here we complete the Poisson cohomology computation for these structures.

The paper is structured as follows. In Section \ref{pre} we recall some basic notions of Poisson geometry and cohomology and in Section \ref{ops} we describe the operators that we will use in the computation of Poisson cohomology around Lefschetz singularities.  The latter is presented in section \ref{points}. Section \ref{sec:Poisson-BLF} provides a brief overview of Poisson structures on broken Lefschetz fibrations.  The paper continues with the general formulas for the Poisson coboundary operators of Jacobian Poisson structures in section \ref{Jacobian compact}. Finally, section \ref{section circles} concludes the paper with the computation of the formal Poisson cohomology around fold singularities.

\noindent {\bf Acknowledgements} 
We warmly thank Fani Petalidou and Anne Pichereau for comments on drafts of this work. We also thank the anonymous referees for valuable observations. R.V. acknowledges partial support by the FWO under EOS project G0H4518N.

\section{Preliminaries}
\subsection{Basic definitions and unimodular Poisson structures}\label{pre}
We first recall some basic objects from Poisson geometry, see \cite{LGPV13} for details.  A Poisson structure on  a smooth manifold $M$ is a Lie bracket $\{\cdot,\cdot\}$ on $C^\infty(M)$ satisfying the Leibniz rule $\{fg,h\}=f\{g,h\}+g\{f,h\}$.  Equivalently, a Poisson bivector field $\pi \in \mathfrak{X}^2(M)=\Gamma(\wedge^2 TM)$ is a bivector field satisfying $\left[ \pi, \pi \right]_{\mathrm{SN}}=0$ for the Schouten-Nijenhuis bracket $\left[ \cdot ,  \cdot \right]_{\mathrm{SN}} \colon \mathfrak{X}^{k}(M)  \times \mathfrak{X}^{l}(M) \rightarrow \mathfrak{X}^{k+l-1}(M)$. The Poisson bracket and bivector field are mutually determined by $\lbrace f, g \rbrace = \langle \pi, df \wedge dg \rangle$. We will use alternatively both expressions throughout the text. 

\noindent Let us now fix the notation and sign conventions for the Schouten-Nijenhuis bracket that we will use.  Fix a system of local coordinates on $M$ and consider $\zeta_i=\partial_{x_i}$ as an odd variable, so that $\zeta_i\zeta_j=-\zeta_j\zeta_i$. A $p$- vector field $P\in\mathfrak{X}^p(M)$ is then written as $\displaystyle P=\sum_{i_1<\cdots<i_p}P_{i_1\cdots i_p}\zeta_{i_1}\cdots\zeta_{i_p}$, with $P_{i_1\cdots i_p}\in C^\infty(M)$. Then for $Q\in\mathfrak{X}^q(M)$, define
\begin{equation}\label{SN}
[P,Q]_{SN}=\sum_i\partial_{\zeta_i}(P)\partial_{x_i}(Q)-(-1)^{(p-1)(q-1)}\partial_{\zeta_i}(Q)\partial_{x_i}(P).
\end{equation}

 \noindent where $\partial_{\zeta_{i_k}}\zeta_{i_1}\cdots\zeta_{i_p}=(-1)^{p-k}\zeta_{i_1}\cdots\widehat{\zeta_{i_k}}\cdots\zeta_{i_p}$. A bivector field $\pi$ induces an operator $\di_{\pi} \colon \mathfrak{X}^{\bullet}(M) \rightarrow \mathfrak{X}^{\bullet + 1}(M)$ by $\di_{\pi}(X) = \left[\pi,X \right]_{\mathrm{SN}}$, and if $\pi$ is Poisson then $\di^2_{\pi} = 0$. The pair $(\mathfrak{X}(M), \di_\pi)$  is called the {\em Lichnerowicz-Poisson cochain complex}, and 
\begin{equation}\label{defcoh}
H^{k}(M,\pi) := \frac{\ker\left( \di_{\pi}\colon \mathfrak{X}^{k}(M) \rightarrow \mathfrak{X}^{k+1}(M) \right) }{ \mathrm{Im}\left( \di_{\pi}\colon \mathfrak{X}^{k-1}(M) \rightarrow \mathfrak{X}^{k}(M) \right)},\;\;\;k=0,\ldots,\dim M,
\end{equation}
are the {\em Poisson cohomology spaces} of $(M, \pi)$.   The zeroth cohomology space $H^0(M,\pi)$ contains the \textsl{Casimir functions}, that is $f\in C^\infty(M)$ such that $\{f,g\}=0,\;\forall g\in C^\infty(M)$. The first cohomology group $H^1(M,\pi)$ is the quotient space of \textsl{Poisson} modulo \textsl{Hamiltonian} vector fields, i.e space of  $X\in \mathfrak{X}^1(M)$ satisfying $\mathcal{L}_{X} \pi = 0$ modulo the subspace of $X\in \mathfrak{X}^1(M)$ such that $X=\di_\pi(f)$.  Futhermore, $H^2(M,\pi)$ is the quotient of infinitesimal deformations of $\pi$ modulo trivial deformations, and $H^3(M,\pi)$ contains the obstructions to formal deformations of $\pi$.

Contraction with $\pi$ defines a vector bundle homomorphism $ \pi^{\sharp} \colon \Omega^1(M) \rightarrow \mathfrak{X}^1(M) $, usually referred to as the anchor map. Pointwise 
$\pi^{\sharp}_{p} (\alpha_p) = \pi_p(\alpha_p,\cdot)$ and it can be extended to a $C^\infty(M)$- linear homomorphism
\begin{equation}\label{sharp}
\wedge^\bullet\pi^\sharp:  \Omega^\bullet(M)\longrightarrow\mathfrak{X}^\bullet(M),
\end{equation}
which we denote again by $\pi^\sharp$.  The  Hamiltonian vector field of $f\in C^{\infty}(M)$ is then $X_f = \pi^{\sharp} (df)$.

Consider an orientable Poisson manifold with positive volume form $\Omega$. The vector field $Y^{\Omega}\colon C^{\infty}(M) \rightarrow C^{\infty}(M)$ defined by 
$$\mathcal{L}_{X_{f}} \Omega = (Y^{\Omega} f) \Omega$$

is a Poisson vector field  known as the {\em modular vector field} with respect to $\Omega$. One can check directly that there is a canonically defined Poisson cohomology class $\left[  Y^{\Omega} \right] $ called the \textsl{modular class} of $(M,\pi)$. If $\left[  Y^{\Omega} \right]=0$ then $(M,\pi)$ is called \textsl{unimodular}.

Let $\star$ denote the family of $C^\infty(M)-$ linear operators 
\begin{equation}\label{star}
\star: \mathfrak{X}^k(M)\to\Omega^{n-k}(M),\;\;\;\star X=\iota_X\Omega.
\end{equation} When $(M,\pi)$ is unimodular, $\star$ induces an isomorphism between the $k$-th Poisson cohomology group $H^k_\pi(M)$ and the $(n-k)$-th Poisson homology group $H^\pi_{n-k}(M)$  \cite[Proposition 4.18]{LGPV13}. 

\subsection{Operators in 4D}\label{ops}
The operators discussed in this section will be used in Sections \ref{Jacobian compact} and \ref{points}.
\subsubsection{Identifications in $\mathbb{R}^4$.}
We henceforth restrict in dimension $n=4$. Set $\partial_{i\cdots k}=\partial_i\wedge\cdots\wedge\partial_k$ and $\mathcal{A}=C^{\infty}(\mathbb{R}^4)$. Let also $\mathfrak{X}^k=\mathfrak{X}^k(\mathbb{R}^4)$  (respectively $\Omega^k=\Omega^k(\mathbb{R}^4)$) denote the spaces of $k$-vector fields (respectively $k$- differential forms) with coefficients from $\mathcal{A}$.  Identify $k-$ vector fields with the ordered tuples of their coefficient functions with the isomorphisms $\iota_k$, where

\begin{align}
\iota_1:\mathfrak{X}^1\stackrel{\simeq}{\longrightarrow}  \mathcal{A}^4,\;&\;\; X=\sum_{i=1}^4f_i\partial_i\mapsto (f_1,f_2,f_3,f_4)^T, \label{X1}
\\
\iota_2:\mathfrak{X}^2\stackrel{\simeq}{\longrightarrow}  \mathcal{A}^6,\;&\;\;U=\sum_{i<j=1}^4f_{ij}\partial_{ij}\mapsto (f_{12},f_{13},f_{14},f_{23},f_{24},f_{34})^T, \label{X2}
\\
\iota_3:\mathfrak{X}^3\stackrel{\simeq}{\longrightarrow}  \mathcal{A}^4,\;&\;\; W=\sum_{i<j<l=1}^4f_{ijl}\partial_{ijl}\mapsto (f_{123}, f_{124}, f_{134}, f_{234})^T,\label{X3} 
\\
\iota_0:  \mathfrak{X}^0\stackrel{\simeq}{\longrightarrow}\mathcal{A},\;&\;\;
\iota_4:\mathfrak{X}^4\stackrel{\simeq}{\longrightarrow}\mathcal{A}, \label{X0X4}
\\
\iota_0(f)=f,\;&\;\;\iota_4(f\partial_{1234})=f. \nonumber
\end{align}

\subsubsection{The operator $D$} 
\noindent
Consider automorphisms 
\[\mathcal{D}:\mathcal{A}^4\to \mathcal{A}^4,\;\;(f_1,f_2,f_3,f_4)\mapsto (g_1,g_2,g_3,g_4).\]
Via (\ref{X1}) and (\ref{X3}), there is one to one correspondence of such automorphisms with $\mathcal{A}-$ automorphisms 
\[\mathcal{D}_1:\mathfrak{X}^1\to\mathfrak{X}^1,\;\;\sum_{i=1}^4f_i\partial_i\mapsto\sum_{i=1}^4g_i\partial_i\] and $\mathcal{A}-$  automorphisms $\mathcal{D}_3: \mathfrak{X}^3\to\mathfrak{X}^3$ with 
\[f_1\partial_{123}+f_2\partial_{124}+f_3\partial_{134}+f_4\partial_{234}
\mapsto g_1\partial_{123}+g_2\partial_{124}+g_3\partial_{134}+g_4\partial_{234}
.\]

\noindent Let 
\begin{equation}\label{id}
I:\mathfrak{X}^k\to\Omega^k,
\end{equation} be the $\mathcal{A}-$ vector space isomorphism extending $\partial_i\mapsto\mathrm{d}x_i$, and $\mathcal{I}:\mathfrak{X}^1\to\mathfrak{X}^3$ be the $\mathcal{A}-$ isomorphism induced by $\mathcal{D}$, i.e.
\[\mathcal{I}\bigg(\sum_{i=1}^4f_i\partial_i\bigg)=g_1\partial_{123}+g_2\partial_{124}+g_3\partial_{134}+g_4\partial_{234}.\]
Then  $\mathcal{D}_1$ can be equivalently determined by the equation 
\[\mathcal{D}_1=I^{-1}\circ\star\circ\mathcal{I},\] 
and  $\mathcal{D}_3$ by the equation
\[\mathcal{I}=\mathcal{D}_3\circ\star^{-1}\circ I.\]
Fix $D_1$ and $D_3$  to be the corresponding $\mathcal{A}-$ automorphisms determined by the choice $\mathcal{D}=\mathrm{Id}_{\mathcal{A}^4}$. 
Since the context will be clear, we will denote them both as $D$. In matrix form,
\begin{equation}\label{D}
   D=
  \left[ {\begin{array}{cccc}
0 & 0 & 0 & 1 \\
0 & 0 & -1 & 0 \\
0 & 1 & 0 & 0 \\
-1 & 0 & 0 & 0 \\
  \end{array} } \right]
.\end{equation}

\noindent Throughout the text, if $X\in\mathfrak{X}^1$, we write $D(X)$ for the vector field $\iota_1^{-1}(D(\iota_1(X)))$ and also for the $4\times 1-$ matrix $D(\iota_1(X))$. In matrix form,

\begin{equation}
 X=\sum_{i=1}^4f_i\partial_i\stackrel{\iota_1}{\longrightarrow}\left[ {\begin{array}{c}
f_1 \\
f_2 \\
f_3 \\
f_4\\
\end{array} } \right]\mapsto D(X)=
\left[{\begin{array}{c}
f_4 \\
-f_3 \\
f_2 \\
-f_1\\
\end{array}}\right]
\stackrel{(\iota_1)^{-1}}{\longrightarrow} f_4\partial_1-f_3\partial_2+f_2\partial_3-f_1\partial_4.
\end{equation}

Similarly for some $Z\in\mathfrak{X}^3$; depending on the context, $D(Z)$ will stand for the 3-vector field $\iota_3^{-1}(D(\iota_3(Z)))$ or the $4\times 1-$ matrix $D(\iota_3(Z))$. In matrix form, for $Z=f_{123}\partial_{123}+f_{124}\partial_{124}+f_{134}\partial_{134}+f_{234}\partial_{234}$, it is

\begin{equation}
Z\stackrel{\iota_3}{\longrightarrow}\left[ {\begin{array}{c}
f_{123} \\
f_{124} \\
f_{134} \\
f_{234}\\
\end{array} } \right]\mapsto D(Z)=
\left[{\begin{array}{c}
f_{234} \\
-f_{134} \\
f_{124} \\
-f_{123}\\
\end{array}}\right]
\stackrel{(\iota_3)^{-1}}{\longrightarrow} f_{234}\partial_{123}-f_{134}\partial_{124}+f_{124}\partial_{134}-f_{123}\partial_{234}.
\end{equation}

\begin{remark}\label{fixD}
One can check that as an element of $SO(4)$, $D$ represents a right-isoclinic rotation of the 4-dimensional space. Since it has purely imaginary adjoint eigenvalues of multiplicity 2, it is a Clifford rotation. Such rotations do not have a fixed plane, however they do have a fixed point. In our use of $D$ in section \ref{points}, this point is identified with the Lefschetz singularity (Definition \ref{D:BLF}).
\end{remark}
\subsubsection{The operator $K$}  Equip $\mathbb{C}^4$ with the standard metric. For $k=2$, the associated Hodge operator $\ast: \Omega^k\to \Omega^{4-k}$ is the $\mathcal{A}-$ linear involution represented by the matrix

\begin{equation}\label{ast}
  \left[ {\begin{array}{cccccc}
0 & 0 & -1 & 0 & 0 & 0 \\
0 & 0 & 0 & 1 & 0 & 0 \\
-1 & 0 & 0 & 0 & 0 & 0 \\
0 & 1 & 0 & 0 & 0 & 0 \\
 0 & 0 & 0 & 0 & 0 & 1 \\
  0 & 0 & 0 & 0 & 1 & 0 \\
  \end{array} } \right].
\end{equation}
Consider now $\mathcal{A}$- automorphisms $\mathcal{K}: \mathcal{A}^6\to \mathcal{A}^6$  
\[\mathcal{K}:\;(f_{12},f_{13},f_{14},f_{23},f_{24}, f_{34})^T\mapsto(g_{12},g_{13},g_{14},g_{23},g_{24}, g_{34})^T.\] 
Via (\ref{X2}) there is a one to one correspondence with automorphisms (keep the same symbol) $\mathcal{K}:\mathfrak{X}^2\to\mathfrak{X}^2$ where \[\mathcal{K}:\;\sum_{1=i<j=4}f_{ij}\partial_{ij}\mapsto \sum_{1=i<j=4}g_{ij}\partial_{ij}.\]

Fix $K\;:\mathfrak{X}^2\to\mathfrak{X}^2$ to be the $\mathcal{A}$-automorphism satisfying the equation
\begin{equation}\label{k}
I^{-1}(\ast( I(K(U))))=K( I^{-1}(\star(U))).
\end{equation}

\noindent Using (\ref{star}),  (\ref{id}), (\ref{ast}), one has that this operator is represented by the $6\times 6$-matrix
\begin{equation}\label{K}
   K=
  \left[ {\begin{array}{cccccc}
0 & 0 & 0 & 1 & 0 & 0 \\
0 & 0 & 0 & 0 & 0 & 1 \\
0 & 0 & -1 & 0 & 0 & 0 \\
1 & 0 & 0 & 0 & 0 & 0 \\
 0 & 0 & 0 & 0 & 1 & 0 \\
  0 & -1 & 0 & 0 & 0 & 0 \\
  \end{array} } \right].
\end{equation}

\noindent Throughout the text, if $U\in\mathfrak{X}^2$, then $K(U)$ will stand for both the bivector field $\iota^{-1}_2(K(\iota_2(U)))$ and the $6\times 1-$ matrix $K(\iota_2(U))$ depending on the context. In matrix form, for $U=\sum_{i<j=1}^4 f_{ij}\partial_{ij}$,

\begin{equation}
U\stackrel{\iota_2}{\longrightarrow}\left[ {\begin{array}{c}
f_{12} \\
f_{13} \\
f_{14} \\
f_{23}\\
f_{24}\\
f_{34}\\
\end{array} } \right]\stackrel{K}{\longrightarrow}
\left[{\begin{array}{c}
f_{23} \\
f_{34} \\
-f_{14} \\
f_{12}\\
f_{24}\\
-f_{13}\\
\end{array}}\right]
\stackrel{(\iota_2)^{-1}}{\longrightarrow}f_{23}\partial_{12}+
f_{34}\partial_{13}-f_{14}\partial_{14}+f_{12}\partial_{23}+f_{24}\partial_{24}-f_{13}\partial_{34}.
\end{equation}

\begin{remark}\label{fixK}
Consider the complex vector space isomorphism between $\mathbb{C}^6$ and $\mathfrak{so}(4,\mathbb{C})$. For both vector spaces there are splittings in 6-dimensional real vector spaces $\mathbb{C}^6=\mathrm{Re}(\mathbb{C}^6)\oplus_\mathbb{R}\mathrm{Im}(\mathbb{C}^6)$ and $\mathfrak{so}(4,\mathbb{C})=\mathfrak{so}^-(4,\mathbb{C})\oplus_\mathbb{R}\mathfrak{so}^+(4,\mathbb{C})$, where the latter refers to anti-self dual and self dual operators. Then $D$ can be thought of as an element of $\mathfrak{so}^-(4,\mathbb{C})$, respectively $iD\in\mathfrak{so}^+(4,\mathbb{C})$.\newline
\noindent Consider the operator $K$ as an element of $SO(6)$. Given the splittings above, $K$ induces an automorphism $\mathfrak{K}$ of $\mathfrak{so}(4,\mathbb{C})$ that leaves $\mathfrak{so}^{\pm}(4,\mathbb{C})$ invariant. In fact, $\mathfrak{K}(D)=-D=\ast D$ and $\mathfrak{K}(iD)=iD=\ast(iD)$. To further motivate $K$ (and thus $D$)  in the setting of Jacobian Poisson structures and their cohomology, we refer to Remark \ref{Knabla}.
\end{remark}

In the sequence we will change between realizations of $k-$ vector fields and $k\times 1-$ matrices without other notice in most cases.

\subsubsection{Other operations in $\mathbb{R}^4$}
  In sections \ref{Jacobian compact} and \ref{points} we will use the involution 
\[\phi:\mathfrak{X}^2\to\mathfrak{X}^2,\;\;\;\;\phi=I^{-1}\circ\ast\circ I\]
which we consider equivalently from  (\ref{X2}) as an automorphism of $\mathcal{A}^6$. Its matrix coincides with the one of $\ast: \Omega^2\to\Omega^2$, see (\ref{ast}).

Given the identification (\ref{X2}), the wedge product $\mathfrak{X}^p\times\mathfrak{X}^q\to\mathfrak{X}^{p+q}$ is given for $p=q=1$ by the matrix formula

\begin{equation}\label{xsBV}
\bigg(X=\left[ {\begin{array}{c}
x_1 \\
x_2 \\
x_3 \\
x_4\\
  \end{array} } \right],
\;\; Y=\left[ {\begin{array}{c}
y_1 \\
y_2 \\
y_3 \\
y_4\\
  \end{array} } \right]\bigg)\mapsto 
X \times_{\iota_2} Y=\left[ {\begin{array}{c}
x_1y_2-x_2y_1 \\
x_1y_3-x_3y_1 \\
x_1y_4-x_4y_1 \\
x_2y_3-x_3y_2 \\
x_2y_4-x_4y_2 \\
x_3y_4-x_4y_3 \\
  \end{array} } \right].
\end{equation}

\begin{remark}\label{PvsBV1}
In \cite[Section 1.4]{P09}, the identification of 2-forms with 6-tuples of coefficient functions is done in another way than the one suggested by (\ref{X2}), namely through the map
\begin{equation} 
i_2:\;\sum_{i<j=1}^4f_{ij}\di _{ij}\mapsto \left[ {\begin{array}{c}
f_{14} \\
f_{12} \\
-f_{23}\\
f_{34} \\
-f_{13} \\
f_{23} \\
  \end{array} } \right],
\end{equation}
denoting $\di_{ij}:=\di x_i\wedge\di x_j$. The wedge product of two 1-forms $\alpha, \beta$ in $\mathbb{R}^4$ is then given by 

\begin{equation}\label{xsforms}
\bigg(\alpha=\left[ {\begin{array}{c}
\alpha_1 \\
\alpha_2 \\
\alpha_3 \\
\alpha_4\\
  \end{array} } \right],
\;\; \beta=\left[ {\begin{array}{c}
\beta_1 \\
\beta_2 \\
\beta_3 \\
\beta_4\\
  \end{array} } \right]\bigg)\mapsto 
\alpha\times_{i_2} \beta=\left[ {\begin{array}{c}
\alpha_1\beta_4-\alpha_4\beta_1 \\
\alpha_1\beta_2-\alpha_2\beta_1 \\
\alpha_3\beta_2-\alpha_2\beta_3 \\
\alpha_3\beta_4-\alpha_4\beta_3 \\
\alpha_3\beta_1-\alpha_1\beta_3 \\
\alpha_2\beta_4-\alpha_4\beta_2 \\
  \end{array} } \right].
\end{equation}
To compare the operations $\times_{\iota_2}$ and $\times_{i_2}$  in terms of bivectors $U\in\mathfrak{X}^2$, note that
\[i_2(I^{-1}(U))=\phi(K(\iota_2(U))),\]
and so for $X,Y\in\mathfrak{X}^1$,
\begin{equation}\label{newop}
I^{-1}(X)\times_{i_2} I^{-1}(Y)=\phi(K(X\times_{\iota_2}Y)).
\end{equation}
To help the interested reader compare the formulas in sections \ref{Jacobian compact}, \ref{points} with the formulas in \cite{P09}, we will keep the operation $\times_{i_2}$ between two vector fields. Alternatively, one can translate all formulas in those sections, using the ordinary wedge product (\ref{xsBV}) via (\ref{newop}). So from now on we will use the following formula for the wedge product between two vector fields,

\begin{equation}\label{xs}
\bigg(X=\left[ {\begin{array}{c}
x_1 \\
x_2 \\
x_3 \\
x_4\\
  \end{array} } \right],
\;\; Y=\left[ {\begin{array}{c}
y_1 \\
y_2 \\
y_3 \\
y_4\\
  \end{array} } \right]\bigg)\mapsto 
X\times Y=\left[ {\begin{array}{c}
x_1y_4-x_4y_1 \\
x_1y_2-x_2y_1 \\
x_3y_2-x_2y_3 \\
x_3y_4-x_4y_3 \\
x_3y_1-x_1y_3 \\
x_2y_4-x_4y_2 \\
  \end{array} } \right],
\end{equation}
where $x_i,y_j\in\mathcal{A}$.
\end{remark}

Denote by $\bar{\times}$ the operator 
\[\bar{\times} : \mathcal{A}^4\times  \mathcal{A}^6\to  \mathcal{A}^4,\;\;(X,U)\mapsto D\big(X\wedge K^{-1}(\phi^{-1}(U))\big).\]
In matrix form, using identifications $X=[x_1,x_2,x_3,x_4]^T, U=[f_{12},f_{13},f_{14},f_{23},f_{24},f_{34}]^T$, with $x_i,f_{ij}\in\mathcal{A}$, it is 

\begin{equation}\label{xbar}
   X\bar{\times} U=
  \left[ {\begin{array}{c}
-x_4f_{14}+x_2f_{23}-x_3f_{34} \\
x_3f_{12}-x_1f_{23}+x_4f_{24}\\
 -x_2f_{12}+x_4f_{13}+x_1f_{34}\\
 -x_3f_{13}+x_1f_{14}-x_2f_{24}\\
  \end{array} } \right].
\end{equation}
\begin{remark}\label{PvsBV2}
Similarly to Remark \ref{PvsBV1}, note that in \cite{P09} the identification of 3-forms to quadruples of coefficient functions is done in another way than the one suggested by (\ref{X3}), namely through the map

\begin{equation}
i_3:\;\sum_{i<j<k=1}^4f_{ijk}\di_{ijk} \mapsto \left[ {\begin{array}{c}
f_{234} \\
-f_{134} \\
f_{124}\\
-f_{123} \\
\end{array} } \right].
\end{equation}
When writing everything in terms of vectors $Z=\sum_{i<j<k=1}^4f_{ijk}\partial_{ijk}$, it is $i_3(I^{-1}(Z))=D(\iota_3(Z))$. We consider the operation (\ref{xbar}) to make again the relation with the corresponding operation in \cite{P09} (keeping in mind the different choice of ordering of coefficients).

\end{remark}

Denoting the gradient of functions by  
\[\displaystyle\nabla: \mathfrak{X}^0\to \mathfrak{X}^1,\;\;f\mapsto \sum_{i=1}^4\partial_i(f)\partial_i,\]
one may also define 
 \begin{equation}\label{nablaxbar}
   \nabla\times X=
  \left[ {\begin{array}{c}
\partial_1(x_4)-\partial_4(x_1) \\
\partial_1(x_2)-\partial_2(x_1) \\
\partial_3(x_2)-\partial_2(x_3) \\
\partial_3(x_4)-\partial_4(x_3)\\
\partial_3(x_1)-\partial_1(x_3)\\
\partial_2(x_4)-\partial_4(x_2)\\
  \end{array} } \right],\;\;\;
\nabla\bar{\times} U= 
  \left[ {\begin{array}{c}
-\partial_4(f_{14})+\partial_2(f_{23})-\partial_3(f_{34}) \\
\partial_3(f_{12})-\partial_1(f_{23})+\partial_4(f_{24})\\
 -\partial_2(f_{12})+\partial_4(f_{13})+\partial_1(f_{34})\\
 -\partial_3(f_{13})+\partial_1(f_{14})-\partial_2(f_{24})\\
  \end{array} } \right],
\end{equation}
  
\noindent following the sense of (\ref{xs}), (\ref{xbar}). \newline
\noindent In terms of notation, we will briefly (in Proposition \ref{compact} and Lemma \ref{2}) make use also of the vector dot product in a specific formula: Let $A,B$ be $4\times 1-$ matrices.  Then $\nabla(A)\cdot B$ is the $4\times 1-$ matrix
\begin{equation}\label{vectordot}
\textbf{grad}(A)^TB=
  \left[ {\begin{array}{cccc}
\partial_1(A_1) &  & \cdots & \partial_4(A_1)\\
\vdots &  & \cdots & \vdots\\
\vdots &  & \cdots & \vdots\\
 \partial_1(A_4) &  & \cdots & \partial_4(A_4)\\
  \end{array} } \right]^T  
   \left[ {\begin{array}{c}
B_1 \\
 \\
\vdots \\
 \\
B_4\\
  \end{array} } \right].
\end{equation}
Finally, denote the usual divergence as 
\[\mathrm{Div}:  \mathfrak{X}^1\to \mathcal{A},\;\;\;\;X=[f_1,f_2,f_3,f_4]^T\mapsto \sum_{i=1}^4\partial_i(f_i).\]

\begin{proposition}\cite{P09}\label{prop:calc-properties}
The operators defined above have the following properties.

\begin{align}
& \phi(u) \cdot y = u \cdot \phi(y),  &\textnormal{for } u, y \in   \mathcal{A}^6\label{Prop-Calc-2}
\\
&\phi(u) \cdot \phi(y) = u\cdot y,  &\textnormal{for } u, y \in   \mathcal{A}^6 \label{Prop-Calc-3}
\\
&u \cdot (y \times z) = y \cdot( z \, \bar{\times} \,  \phi(u)),  &\textnormal{for } u\in   \mathcal{A}^6, \, y,z \in   \mathcal{A}^4\label{Prop-Calc-4}
\\
& (u\times z) \cdot \phi(u\times y) = 0,   &\textnormal{for } u,y,z \in   \mathcal{A}^4 \label{Prop-Calc-8}
\\
& u \, \bar{\times} \, (y\times z) = y \, \bar{\times} \, (z \times u) , 
 &\textnormal{for } u,y,z \in   \mathcal{A}^4 \label{Prop-Calc-9}
\\
& z\, \bar{\times} \, \phi(u\times y) = -(z\cdot u) y + (z\cdot y)u, \quad &\textnormal{for } u,y,z \in   \mathcal{A}^4 \label{Prop-Calc-11}
\\
& (u \, \bar{\times} \, z) \, \bar{\times} \, \phi(u\times y) = - (z\cdot \phi(u\times y)) u , &\textnormal{for }  u,y \in   \mathcal{A}^4, z \in   \mathcal{A}^6  \label{Prop-Calc-14}
\\
&\nabla \, \bar{\times} \, (u\times y) = y \, \bar{\times} \, (\nabla \times u) -u \, \bar{\times} \, (\nabla \times y), &\textnormal{for }  u,y \in   \mathcal{A}^4 \label{Prop-Calc-15}
\\
&\nabla \times Fu = \nabla F \times u + F (\nabla \times u) , F\in  V, &\textnormal{for } u \in   \mathcal{A}^4 \label{Prop-Calc-16}
\\
&\nabla \, \bar{\times} \, Fy =  \nabla F \, \bar{\times} \, y + F (\nabla  \, \bar{\times} \, y)  , &\textnormal{for } F \in  \mathcal{A}, y \in   \mathcal{A}^6 \label{Prop-Calc-17}
\\ 
&\mathrm{Div}(Fu) = \nabla F \cdot u + F\mathrm{Div}(u), &\textnormal{for } F \in  \mathcal{A}, u \in   \mathcal{A}^4 \label{Prop-Calc-18}
\\
&\mathrm{Div}(u\, \bar{\times} \, y) = y \cdot \phi(\nabla \times u) - u \cdot (\nabla \, \bar{\times} \, y) , &\textnormal{for } u \in   \mathcal{A}^4, y \in   \mathcal{A}^6. \label{Prop-Calc-19}
\end{align}

\end{proposition}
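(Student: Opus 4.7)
The plan is to verify the identities by recognising that all the pointwise operations are matrix-coded versions of exterior algebra and Hodge star on $\mathbb{R}^4$, and that the differential identities reduce to Leibniz rules once the algebraic ones are established. I would first split the list into the algebraic block (\ref{Prop-Calc-2})--(\ref{Prop-Calc-14}) and the differential block (\ref{Prop-Calc-15})--(\ref{Prop-Calc-19}).

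For the algebraic block I would set up a dictionary between the tuple presentations $\iota_1, \iota_2, \iota_3$ and the exterior algebra of $\mathbb{R}^4$ with the standard metric. Under this dictionary $X\cdot Y$ becomes the inner product on $\Lambda^k$, the operation $X\times Y$ corresponds (up to the basis reordering of $\mathcal{C}^6$) to the wedge $X\wedge Y$, the operation $X\,\bar{\times}\, Y$ is $-\ast(X\wedge Y)$ by definition, and $\phi$ is precisely the Hodge star $\ast\colon\Lambda^2\to\Lambda^2$ expressed in the chosen $\mathcal{C}^6$ basis. With this in hand, (\ref{Prop-Calc-2}) and (\ref{Prop-Calc-3}) are the self-adjointness and orthogonality of $\ast$ on $\Lambda^2$; (\ref{Prop-Calc-4}) is the wedge/interior-product adjunction rephrased through $\bar{\times}$ and $\phi$; (\ref{Prop-Calc-8}) follows from $u\wedge u=0$ combined with the Hodge identity $\alpha\wedge\beta=\langle\alpha,\ast\beta\rangle\,\mathrm{vol}$; (\ref{Prop-Calc-9}) is the total antisymmetry of the triple wedge of three 1-vectors; and (\ref{Prop-Calc-11}) and (\ref{Prop-Calc-14}) are four-dimensional analogues of the BAC--CAB rule that follow from $\ast\ast|_{\Lambda^2}=\mathrm{id}$ together with standard contraction identities for interior products.

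For the differential block I would peel off the Leibniz rule for each of the first-order operators $\nabla\times$, $\nabla\,\bar{\times}\,$, $\mathrm{Div}$. Identities (\ref{Prop-Calc-16})--(\ref{Prop-Calc-18}) are immediate from the product rule on partial derivatives applied to the coefficient formulas. Identity (\ref{Prop-Calc-15}) is the four-dimensional replacement of the classical $\nabla\cdot(u\times y)=y\cdot(\nabla\times u)-u\cdot(\nabla\times y)$, obtained from the graded Leibniz rule for $d$ applied to $u\wedge y$ followed by $-\ast$; likewise (\ref{Prop-Calc-19}) follows from the Leibniz rule for $d$ applied to the product of a 1-form and a 2-form, then translated through the dictionary of the previous paragraph.

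The main bookkeeping obstacle is that the basis ordering used in $\iota_2$ (namely $12,13,14,23,24,34$) together with the matrix formulas given for $\times$, $\bar{\times}$, $\phi$ do not sit in the standard lexicographic wedge order---there are permutations and sign flips baked in, chosen so that $K$ and $D$ take the simple forms needed for sections \ref{Jacobian compact} and \ref{points}. Rather than quoting exterior-algebra identities verbatim, I would track the signed permutation relating the two orderings, verify that $\phi$ really coincides with $\ast$ up to this conjugation, and only then invoke the geometric identities. An independent sanity check is to verify one identity of each type---say (\ref{Prop-Calc-11}) for the algebraic block and (\ref{Prop-Calc-19}) for the differential block---by direct coefficient expansion from the matrix formulas for $\times$ and $\bar{\times}$; if all coordinates match, the basis conventions have been applied correctly and the exterior-algebra translation is legitimate.
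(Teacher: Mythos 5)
Your proposal is correct, but it takes a genuinely different route from the paper: the paper's entire proof of Proposition \ref{prop:calc-properties} is the two words ``Direct computation'' (the identities are imported from \cite{P09}, where they are likewise checked in coordinates), whereas you propose to establish a dictionary identifying $\times$, $\bar{\times}$ and $\phi$ with the wedge product, $-\ast(\cdot\wedge\cdot)$ and the Hodge star on $\Lambda^\bullet\mathbb{R}^4$ conjugated by the signed permutation implicit in $\iota_2$, and then to derive all twelve identities from standard exterior-algebra facts (adjunction of wedge and contraction, $u\wedge u=0$, the four-dimensional BAC--CAB contraction identity, and the graded Leibniz rule for the differential block). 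This is a legitimate and arguably preferable argument: it replaces twelve independent coordinate verifications by a single finite check --- that the dictionary is correct --- after which each identity is a known statement, and it explains structurally why, e.g., \eqref{Prop-Calc-8} and \eqref{Prop-Calc-9} hold. The paper's brute-force route buys self-containedness and avoids any risk of misidentifying the conventions; your route buys conceptual clarity but lives or dies on the sign and permutation bookkeeping you correctly single out as the danger point (note that the matrix the paper displays for $\ast$ on $\Omega^2$ is already written in the permuted, signed basis coming from the formula for $X\times Y$, not in lexicographic wedge order, so the conjugation you describe is genuinely necessary and not optional). Your proposed spot-check of \eqref{Prop-Calc-11} and \eqref{Prop-Calc-19} by direct coefficient expansion is exactly the right safeguard; to turn the plan into a complete proof you would need to actually carry out the verification of the dictionary (a finite computation comparable in size to checking two or three of the identities directly), but there is no gap in the idea.
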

\begin{proof}
Direct computation.
\end{proof}

\section{Poisson structure on broken Lefschetz fibrations}\label{sec:Poisson-BLF}

\begin{definition}
\label{D:BLF}
On a smooth, closed 4-manifold $M$, a {\it broken Lefschetz fibration} or {\it bLf} is a smooth map $f\colon M \rightarrow S^2$ that is a submersion outside a singularity set $C\sqcup \Gamma$. The allowed singularities are of the following type:
\begin{enumerate}
\item {\it Lefschetz} singularities:  finitely many points  $$C= \{ p_1, \dots , p_r\} \subset M,$$  which are locally modeled by complex charts
$$ \mathbb{C}^{2} \rightarrow \mathbb{C}  ,  \quad \quad (z_1, z_2) \mapsto z_{1}^{2} + z_{2}^{2},$$
\item {\it indefinite fold} singularities, also called {\it broken}, contained in the smooth embedded 1-dimensional submanifold $\Gamma \subset M \setminus  C$, and which are locally modelled by the real charts
$$ \mathbb{R}^{4} \rightarrow \mathbb{R}^{2} ,  \quad \quad  (\theta,x_1,x_2,x_3) \mapsto (\theta, - x_{1}^{2} + x_{2}^{2} + x_{3}^{2}).$$
\end{enumerate}
\end{definition}

In \cite{GSV} it was shown that a singular Poisson structure $\pi$ of rank 2 can be associated to the fibration structure of a bLf in such a way that the fibres of $f$ correspond to the leaves of the foliation induced by $\pi$ and the singularity set of $f$ is precisely the singular locus of the bivector. We recall the statement. 

\begin{theorem}\cite{GSV}\label{thm-GSV}
Let $M$ be a closed oriented smooth 4-manifold. On each homotopy class of maps from $M$ to the 2-sphere there exists a complete Poisson structure of rank 2 on $M$ whose associated Poisson bivector vanishes only on a finite collection of circles and isolated points.  
\end{theorem}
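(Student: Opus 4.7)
The plan is to decompose the theorem into a topological existence result and a local-to-global differential-geometric construction. Every homotopy class of maps $X \to S^2$ contains a bLf representative by the theorem of Auroux-Donaldson-Katzarkov and its refinements (Gay-Kirby, Lekili, Baykur), so I may assume a bLf $f: X \to S^2$ with singular set $C \cup \Gamma$ is given. The remaining task is to produce a rank-2 Poisson bivector $\pi$ with vanishing locus exactly $C \cup \Gamma$; completeness will be automatic because $X$ is compact.

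Away from $C \cup \Gamma$ the map $f$ is a submersion, and because both $X$ and $S^2$ are oriented the vertical distribution $V = \ker df \subset TX$ is an oriented integrable rank-2 subbundle. Any nowhere-vanishing section $\pi_0 \in \Gamma(\Lambda^2 V) \subset \Gamma(\Lambda^2 TX)$ is automatically Poisson: locally $\pi_0 = X \wedge Y$ with $X,Y$ spanning $V$, so $[\pi_0, \pi_0]_{\mathrm{SN}} = 2[X,Y]\wedge X \wedge Y$, and Frobenius integrability of $V$ places $[X,Y]$ in $V = \mathrm{span}(X,Y)$, forcing this trivector to vanish. Such $\pi_0$ exists, for instance by choosing a Riemannian metric on $X$ and taking the bivector dual to the induced fiberwise area form. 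Near a fold circle I would substitute the explicit Jacobian Poisson structure built from the Casimirs $Q^1 = \theta$ and $Q^2 = -x_1^2 + x_2^2 + x_3^2$, namely the bivector $\ast(dQ^1 \wedge dQ^2)$ in the standard volume of the local real normal form of Definition \ref{D:BLF}; near a Lefschetz point I would use the analogous construction with $P^1, P^2$ the real and imaginary parts of $z_1^2 + z_2^2$. In both cases the bivector has rank $2$ exactly where $df$ does and vanishes with the correct order at the singular stratum.

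To glue these pieces into a global Poisson tensor I would use a partition-of-unity argument whose key point is that all candidate bivectors produced above have symplectic leaves contained in fibers of $f$. Hence on any overlap two such bivectors differ by multiplication by a positive smooth function, and a sum $\pi = \sum_\alpha \rho_\alpha \pi_\alpha$ remains tangent to the (possibly singular) fibers of $f$; the Frobenius argument of the previous paragraph then shows that $\pi$ is still Poisson, with the prescribed local normal forms recovered by arranging the partition to be identically $1$ on small neighborhoods of $C \cup \Gamma$. Completeness of $\pi$ follows from compactness of $X$: every symplectic leaf is an open subset of a fiber $f^{-1}(q) \subset X$, hence relatively compact, and every Hamiltonian vector field $X_h = \pi^{\sharp}(dh)$ vanishes on the singular stratum within a singular fiber, so its flow is defined for all time.

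The main obstacle is the compatibility between the generically constructed vertical bivector $\pi_0$ and the prescribed normal forms near the two types of singularity. Concretely, one must rescale $\pi_0$ by a positive function on each coordinate patch around a fold circle or Lefschetz point so that it agrees smoothly with the Jacobian model there, in particular reproducing the stated vanishing orders and the signs dictated by the chosen orientations. This is a local calculation in the given charts, amounting to matching a fiberwise area form on a smoothly varying family of smooth fibers with the Jacobian-determined area form that degenerates at the singular fiber; it is the one step where the choices are genuinely auxiliary rather than canonical, but once it is carried out the Jacobi identity and completeness come for free from the foliation-by-fibers picture.
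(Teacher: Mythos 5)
The paper does not prove this statement itself but imports it from \cite{GSV}, and your argument is essentially the proof given there: a broken Lefschetz fibration exists in every homotopy class (Akbulut--Karakurt, Baykur, Lekili, Gay--Kirby), a nowhere-vanishing section of $\Lambda^2\ker df$ on the regular part is automatically Poisson by the Frobenius argument, the explicit Jacobian models with Casimirs $(\theta,\,-x_1^2+x_2^2+x_3^2)$ and $(P^1,P^2)$ are installed near the singular strata, and the pieces are glued by positive conformal rescaling of a rank-$2$ decomposable bivector, with completeness free from compactness of $X$. Your reconstruction is correct, including the honest identification of the one nontrivial step: matching signs and conformal factors between the generic vertical bivector and the prescribed normal forms, which is exactly where \cite{GSV} invests its local computations.
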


The local model of $\pi$ around the singular locus $\Gamma$ is given by

\begin{equation} \label{eq:pi-gamma-with-k}
\pi_{\Gamma_h}= h(\theta, x_1, x_2, x_3)  \left( x_1 \frac{\partial}{\partial x_2} \wedge \frac{\partial}{\partial x_3} + x_2 \frac{\partial}{\partial x_1} \wedge \frac{\partial}{\partial x_3} - x_3 \frac{\partial}{\partial x_1} \wedge \frac{\partial}{\partial x_2} \right).
\end{equation}
where $h$ is a non-vanishing function. Around the points of $C$ the local model is given by

\begin{align}\label{eq:pi-C}
\begin{split}
\pi_{C_h}=h(x_1, x_2, x_3, x_4)
&\left[(x_3^2+x_4^2)\frac{\partial}{\partial x_1}\wedge \frac{\partial}{\partial x_2}+ (x_2x_3-x_1x_4)\frac{\partial}{\partial x_1}\wedge \frac{\partial}{\partial x_3}\right.
\\ 
&- \left.(x_1x_3+x_2x_4) \frac{\partial}{\partial x_1}\wedge \frac{\partial}{\partial x_4}
+(x_1x_3+x_2x_4)\frac{\partial}{\partial x_2}\wedge \frac{\partial}{\partial x_3}\right.
\\ 
&+ \left.(x_2x_3-x_1x_4)\frac{\partial}{\partial x_2}\wedge \frac{\partial}{\partial x_4}+(x_1^2+x_2^2)\frac{\partial}{\partial x_3}\wedge \frac{\partial}{\partial x_4}\right],
\end{split}
\end{align}
where $h$ is again a non-vanishing function.

%
%

\section{Coboundary formulas for Jacobian Poisson structures in $4$ dimensions.}\label{Jacobian compact}

A Poisson structure on $\mathbb{R}[x_1,\cdots,x_n]$ is called Jacobian (\cite{Dam89}, attributed to Flaschka and Ratiu)  if there are $n-2$ generic polynomial functions $P_1,\cdots, P_{n-2}$ such that the Poisson bracket of two coordinate functions is given by
\begin{equation}\label{Jac}
\{x_i,x_j\}_\mu=\mu(x_1,\cdots,x_n)\frac{\di x_i\wedge\di x_j\wedge \di P_1\wedge\cdots\wedge\di P_{n-2}}{\di x_1\wedge\cdots\wedge\di x_n}.
\end{equation}
Denote by $\pi_\mu$ the bivector field corresponding to $\{\cdot,\cdot\}_\mu$. Obviously the $P_i$'s are Casimirs of $\pi_\mu$. It is easily checked that Jacobian structures are examples of unimodular Poisson structures and so the family (\ref{star}) of isomorphisms $\star$  induces a family of  isomorphisms
\[H^k(\mathbb{R}^n, \pi_\mu)\stackrel{\simeq}{\rightarrow} H_{n-k}(\mathbb{R}^n, \pi_\mu),\]
between Poisson cohomology and Poisson homology.

The Hamiltonian vector fields of the coordinate functions for (\ref{Jac}) are
 \begin{equation}\label{hamm}
 X_i^\mu=<\pi_\mu,\di x_i>,\;\;i=1,\ldots,n.
 \end{equation}  From (\ref{hamm}) one has that
 \begin{equation}\label{formulaham}
 X_i^\mu=\sum_{j=1}^n\{x_i,x_j\}_{\mu}\partial_j=\mu\sum_{j=1}^n\{x_i,x_j\}_{1}\partial_j,\;\;i=1,\ldots,n,
 \end{equation}
where $\{\cdot,\cdot\}_{1}:=\{\cdot,\cdot\}_{\mu=1}$.

The next Proposition contains compact general formulas for the coboundary operators in the Poisson cohomology of such structures  in dimension $n=4$. The proof splits in Lemmata \ref{0} - \ref{3}. We henceforth drop the subscript $\pi$ from the operators $\di^k_\pi:\mathfrak{X}^k(\mathbb{R}^4)\to\mathfrak{X}^{k+1}(\mathbb{R}^4)$ in (\ref{defcoh}).

\begin{proposition}\label{compact}
 The coboundary operators for the Poisson cohomology with smooth coefficients, of Jacobian Poisson structures in $\mathbb{R}^4$, are

\begin{equation}\label{d0-operator}
\di^0(g)=  \mu\nabla g\;\bar{\times}(\nabla P_1\times\nabla P_2)
\end{equation}
\begin{equation}\label{d1-operator}
\di^1(Y)=K^{-1}\left[ \phi\left(\di^0\times Y\right)  -   Y(\mu \nabla P_1\times\nabla P_2)\right]
\end{equation}
\begin{equation}\label{d2-operator}
\di^2(W) = D \left[ - \di^0\bar{\times}\phi(K(W))  +     \frac{1}{\mu}\di^0(\mu)\bar\times \phi(K(W))   +     \nabla(\mu\nabla P_1\times\nabla P_2)\cdot \phi(K(W))\right]
\end{equation}
\begin{equation}\label{d3-operator}
\di^3(Z) = - \mu\left( \nabla \times D(Z)\right) \cdot \phi\left(\nabla P_1 \times \nabla P_2 \right)  -  \frac{1}{\mu} D(Z)\cdot\di^0(\mu).
\end{equation}
\end{proposition}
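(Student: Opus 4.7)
The plan is to compute each coboundary $\di^k(X) = [\pi_\mu, X]_{SN}$ directly from the coordinate definition of the Schouten-Nijenhuis bracket, and then translate the resulting $(k+1)$-vector field into the compact notation of Section~\ref{ops}. The starting observation is that, under the identification $\iota_2$ of \eqref{X2}, the Jacobian bivector $\pi_\mu$ corresponds (up to a controlled sign) to $K(\mu\,\nabla P_1 \times \nabla P_2) \in \mathcal{C}^6$, so every expression in $\pi_\mu$ can be encoded via the products $\times$, $\bar\times$ composed with the operators $K$, $D$, $\phi$ from Section~\ref{ops}. A second key device is the derivation property of $[\cdot,\cdot]_{SN}$: splitting $\pi_\mu = \mu \pi_1$, where $\pi_1$ is the $\mu=1$ Jacobian bivector, lets one reduce to the Sklyanin-type calculus used by Pelap in \cite{P09} and then isolate the $\mu$-dependence, which is precisely what produces the correction terms $\frac{1}{\mu}\di^0(\mu)$ appearing in \eqref{d2-operator} and \eqref{d3-operator}.

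For $\di^0$, I would identify $dg$ with $\nabla g \in \mathcal{C}^4$ via $I^{-1}$ and rewrite the contraction $-\pi_\mu^\sharp(dg)$ as a $\bar\times$ product using \eqref{Prop-Calc-4}; this directly produces \eqref{d0-operator}. For $\di^3$, one uses that the Lichnerowicz differential on $\mathfrak{X}^3$ can be reduced, via the volume form and the isomorphism $\iota_3$ of \eqref{X3}, to a divergence-type operation on $D(Z) \in \mathcal{C}^4$. Combining \eqref{Prop-Calc-19} (which converts divergences of $\bar\times$-products into curls paired via $\phi$) with the Leibniz decomposition of $\mu \pi_1$ then gives the two terms of \eqref{d3-operator}: the first from $\nabla \times D(Z)$ paired with $\phi(\nabla P_1 \times \nabla P_2)$ via the $\mu=1$ calculation, and the second from differentiating $\mu$, rewritten via $\di^0$.

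For $\di^1$ and $\di^2$ I would again use $\pi_\mu = \mu \pi_1$ together with the Leibniz rule in each slot. Presenting $Y \in \mathfrak{X}^1$ through $\iota_1$, the bracket $[\pi_1, Y]_{SN}$ reduces, after applying $K^{-1}$ to undo the $K$ present in the $\iota_2$-representation of $\pi_1$, to the combination $Y\cdot(\nabla P_1 \times \nabla P_2) + \phi(\di^0 \times Y)$ via \eqref{Prop-Calc-2}--\eqref{Prop-Calc-4}; restoring the $\mu$-prefactor yields \eqref{d1-operator}. The computation for $\di^2$ proceeds along the same lines with $W \in \mathfrak{X}^2$ presented as $K(W) \in \mathcal{C}^6$, using $\phi$ and $D$ together with \eqref{Prop-Calc-14}, \eqref{Prop-Calc-15} and \eqref{Prop-Calc-19} to collapse the resulting sums of curls, cross products and divergences into the three terms of \eqref{d2-operator}: the first from the $\mu=1$ Schouten computation, the second from the Leibniz rule applied to the scalar factor $\mu$, and the third from differentiating the vector factor $\mu\,\nabla P_1 \times \nabla P_2$.

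The main obstacle will be the bookkeeping: one has to track the placement of $K$, $D$ and $\phi$ needed to pass between the different $\mathcal{C}^k$-realizations of $k$-vector fields, the signs arising from the Hodge star convention for $\ast$ and from the graded Leibniz rule, and the ``$\mu$-derivative'' terms contributed by every application of that rule, which must be recollected into the form $\frac{1}{\mu}\di^0(\mu)$. Once the right identities from Proposition~\ref{prop:calc-properties} are applied systematically, the remaining verification is a lengthy but routine algebraic rewriting and no new identity beyond those of Proposition~\ref{prop:calc-properties} is required.
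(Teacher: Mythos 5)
Your proposal is correct and follows essentially the same route as the paper: a direct coordinate computation of $[\pi_\mu,\cdot]_{SN}$, the dictionary identifying $\pi_\mu$ with $\mu\,\nabla P_1\times\nabla P_2$ under the $\iota_k$-identifications (the paper's equation \eqref{P1P2}), and a split of each bracket into the part differentiating the multivector coefficients and the part differentiating the structure functions, the latter producing the $\tfrac{1}{\mu}\di^0(\mu)$ and $\boxtimes$ terms. Your organization via $\pi_\mu=\mu\pi_1$ and the graded Leibniz rule is only a cosmetic variant of the paper's $A+B$ decomposition, and like the paper you would still need to spell out the ad hoc $\bar{\partial}(\cdot)\boxtimes(\cdot)$ operation appearing in \eqref{d2-operator}.
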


On the left side of the formulas (\ref{d0-operator}) to  (\ref{d3-operator}), the notation refers to vector fields, while given the definitions in section \ref{ops}, on the right side there are matrices of coefficient functions.  One should use the isomorphisms $\iota_k$ (\ref{X1}) to  (\ref{X0X4})  in section \ref{ops} to pass from one side to the other. We chose to suppress them in the Proposition's statement to ease the notation, however we will indicate their use in the statements and proofs of the following Lemmata.

\begin{lemma}\label{0}
Given the assumptions of Proposition \ref{compact}, let $g\in \mathcal{A}$ and $\iota_1\colon \mathfrak{X}^1\to\mathcal{A}^4$. Then
\[\iota_1(\di^0(g))=   \mu\nabla g\;\bar{\times}(\nabla P_1\times\nabla P_2).\]
\end{lemma}

\begin{proof} By (\ref{formulaham}) we can equivalently write $\displaystyle \pi_\mu=   \frac{1}{2}\sum_{i=1}^n\partial_i\wedge X_i^\mu$. One then has
\begin{align}\label{d0 form}
\di^0(g)=&[\pi_\mu,g]_{SN}\stackrel{(\ref{SN})}{=}\sum_{i=1}^4\partial_{\zeta_i}(\pi_\mu)\partial_i(g)+0
\\
\nonumber
=&   -     \frac{1}{2}\sum_{i=1}^4\sum_{s=1}^4\{x_i,x_s\}_\mu\partial_s\wedge\partial_i(g)    +  \frac{1}{2}\sum_{i=1}^4\sum_{k=1}^4\{x_k,x_i\}_\mu\partial_k\wedge \partial_i(g)
\\
\nonumber
=& 
 -    \sum_{i,k=1}^4\partial_i(g)\{x_i,x_k\}_\mu\partial_k\stackrel{(\ref{formulaham})}{=}
\sum_{k=1}^4X_k^\mu(g)\partial_k.
\\
\nonumber
\end{align}

By (\ref{Jac}) it is
\begin{equation}\label{P1P2}
   \nabla P_1\times\nabla P_2=\mu^{-1}
  \left[ {\begin{array}{c}
\{x_2,x_3\}_\mu \\
\{x_3,x_4\}_\mu  \\
-\{x_1,x_4\}_\mu  \\
\{x_1,x_2\}_\mu  \\
\{x_2,x_4\}_\mu  \\
-\{x_1,x_3\}_\mu  \\
  \end{array} } \right]
  =
  \left[ {\begin{array}{c}
\{x_2,x_3\}_{1} \\
\{x_3,x_4\}_{1}  \\
-\{x_1,x_4\}_{1}  \\
\{x_1,x_2\}_{1}  \\
\{x_2,x_4\}_{1}  \\
-\{x_1,x_3\}_{1}  \\
  \end{array} } \right].
\end{equation}
Then using (\ref{xbar}), (\ref{P1P2}) one gets that 
\[\nabla g\bar{\times}(\nabla P_1\times\nabla P_2)=\displaystyle \iota_1\bigg(\sum_{i=k}^4\mu^{-1} X_k^\mu(g)\partial_k\bigg).\] The claim follows from (\ref{d0 form}).
\end{proof} 

\noindent We will often use the equation $\partial_{\zeta_i}(\pi_\mu)=-X^\mu_i$ in the next Lemmata.
\begin{lemma}\label{1}
Given the assumptions of Proposition \ref{compact}, let $Y\in\mathfrak{X}^1$ and $\iota_2\colon\mathfrak{X}^2\to\mathcal{A}^6$. Then
\[\iota_2(\di^1(Y))=K^{-1}\big[ \phi\left(\di^0\times Y\right)  -  Y\big(\mu \nabla P_1\times\nabla P_2)\big)\big].\]
\end{lemma}

\begin{proof} Let $Y=\sum_{i=1}^4f_i\partial_i$. Then $\di^1(Y)=[\pi_\mu,Y]_{SN}= A-B$ where from (\ref{SN}),
\[A=\sum_{i=1}^4\partial_{\zeta_i}(\pi_\mu)\partial_{x_i}(Y)   =       \sum_{i,j=1}^4X_i^\mu(f_j)\partial_{ij},\]
and
 \[B=\sum_{i=1}^4\partial_{\zeta_i}(Y)\partial_{x_i}(\pi_\mu)  = \sum_{i<j=1}^4\left[\sum_{k=1}^4f_k\partial_k(\{x_i,x_j\}_\mu)\right]\partial_{ij}.\]
The formulas above mean that for example the coefficient of $\partial_{23}$ contributed by $A$ is $(X_2^\mu(f_3)   -    X_3^\mu(f_2))\partial_{23}$, while the contribution of $B$ is $ \sum_{k=1}^4f_k\partial_k(\{x_2,x_3\}_\mu)\partial_{23}$.

Use (\ref{d0 form}) to identify the operator $\di^0$ with the vector (of vector fields)
\begin{equation}\label{d0vector}
\di^0=[X_1^\mu,     X_2^\mu,     X_3^\mu,     X_4^\mu]^T\in(\mathcal{A}^4)^4,
\end{equation}
meaning that each entry is a vector field and so is determined by four functions in $\mathcal{A}$ \footnote{In other words, one here might think of $\di^0$ as the square matrix determining $\pi_\mu$, i.e $\di^0\in\mathcal{A}^{4\times 4}$.}. Then by (\ref{ast}), (\ref{K}), (\ref{xs}) and (\ref{d0-operator}), one computes that 

 \[\iota_2(A)=     K^{-1}\big(\phi(\di^0\times Y)\big),\]
 
\noindent where $\di^0\times Y$ is understood in the sense of (\ref{nablaxbar}) replacing $\partial_i$ with $X_i^\mu$.

For the term $B$, let $Y$ act as a linear differential operator on each entry of the $6\times 1$ matrix $\mu(\nabla P_1\times\nabla P_2)$ given by (\ref{P1P2}). For example the first entry of the $6\times 1$ matrix $Y\big(\mu(\nabla P_1\times\nabla P_2)\big)$ is the function $\displaystyle \sum_{i=1}^4f_i\partial_i(\{x_2,x_3\}_\mu)$. Then $K^{-1}$ sends this matrix precisely to the part of $\di^1(Y)$ contributed by $B$, i.e.
 \[\iota_2(B)=   K^{-1}\left[Y\big(\mu(\nabla P_1\times\nabla P_2)\big)\right]. \]\end{proof}

\begin{lemma}\label{2}
Given the assumptions of Proposition \ref{compact}, let $W\in\mathfrak{X}^2$ and $\iota_3\colon\mathfrak{X}^3\to\mathcal{A}^4$. Then
\[\iota_3(\di^2(W)) =   D \left[    -    \di^0\bar{\times}\phi(K(W))   +     \frac{1}{\mu}\di^0(\mu)\bar\times \phi(K(W))    +     \nabla\big(\mu\nabla P_1\times\nabla P_2)\big)\cdot \phi(K(W))\right].\]
\end{lemma}

\begin{proof}
Let $W=\sum_{i<j=1}^4f_{ij}\partial_{ij}$. Then $\di^2(W)=[\pi_\mu,W]_{SN}=A+B$ where after a short computation with (\ref{SN}) we get
\[A=\sum_{i=1}^4\partial_{\zeta_i}(\pi_\mu)\partial_{x_i}(W)=        \sum_{\textrm{cycl}(i,j,s)} X_i^\mu(f_{js})\partial_{ijs},\]
and
\begin{equation}\label{d2B}
B=\sum_{i=1}^4\partial_{\zeta_i}(W)\partial_{x_i}(\pi_\mu)=   -  \sum_{\textrm{cycl}(i,j,s)}\big[f_{ks}\partial_k(\{x_i,x_j\}_\mu)-f_{sk}\partial_k(\{x_i,x_j\}_\mu)\big]\partial_{ijs}.
\end{equation}
Here, the sum for $A$ is taken cyclically on $(i,j,s)$, for example the function contributed by $A$ to the coefficient of $\partial_{123}$ in $\di^2(W)$ is 
\[\big(  X_1^\mu(f_{23})   -   X_2^\mu(f_{13})   +   X_3^\mu(f_{12})\big)\partial_{123}.\]
The sum for $B$ is taken also cyclically on $(i,j,s)$. For example, to compute the function contributed by $B$ to the coefficient of $\partial_{123}$, compute the given expression in (\ref{d2B}) for $\partial_{ijs}=\partial_{123}, \partial_{312},\partial_{231}$ (see also (\ref{luck})). One gets that for $\partial_{312}$, it is $i=3,j=1,s=2$ and so $f_{ks}$ is $f_{12}$ and $f_{sk}$ can be $f_{23}$ and then $f_{24}$\footnote{In our notation of the coefficient functions, $f_{ks}$ implies that $k<s$.}. 

Considering $\di^0$ as a vector (of vector fields) by (\ref{d0vector}), a direct computation in the sense of (\ref{nablaxbar}) replacing $\partial_i$ with $-X^\mu_i$, shows that 
\[\iota_3(A)=-D[\di^0\bar{\times}\phi(K(W))].\]

On the other hand we will show that 
\begin{equation}\label{i2B}
\iota_3(B)=  D\big[ (\nabla\mu\bar{\times}(\nabla P_1\times\nabla P_2))\bar\times \phi(K(W))+\nabla(\mu\nabla P_1\times\nabla P_2)\cdot\phi(K(W))\big].
\end{equation}
\noindent The first term in the bracket of the right hand side is understood through the operations defined in section \ref{ops} and then one can rewrite this term using (\ref{d0-operator}). The second term is the only instance in the paper where we use (\ref{vectordot}).  

\noindent Since the entire computation is long, we just give brief details on how to recover a particular coefficient function in (\ref{i2B}); all others follow similarly. Thus we focus on the coefficient function of $\partial_{123}$ contributed by the term $B$. By (\ref{d2B}) and the discussion below, this function is 

\begin{align}\label{luck}
 & - \partial_1(\{x_1,x_2\}_\mu) f_{13}   -     \partial_2(\{x_1,x_2\}_\mu) f_{23}    +   \partial_4(\{x_1,x_2\}_{\mu}) f_{34}
\\
\nonumber
&   -    \partial_1(\{x_3,x_1\}_\mu)f_{12}    +     \partial_3(\{x_3,x_1\}_\mu)f_{23}    +    \partial_4(\{x_3,x_1\}_\mu)f_{24}
\\
\nonumber
& +\partial_2(\{x_2,x_3\}_\mu)f_{12}    +    \partial_3(\{x_2,x_3\}_\mu)f_{13}    +     \partial_4(\{x_2,x_3\}_\mu)f_{14},
\end{align}

 \noindent where the first line is written out of the term in $B$ corresponding to $\partial_{ips}=\partial_{123}$, the second line from the term $\partial_{ips}=\partial_{312}$ and the third from the term $\partial_{ips}=\partial_{231}$. Then look at the function multiplied with e.g. $f_{23}$. Computing directly with (\ref{Jac}), and since the Casimirs are polynomial and their second derivatives commute,
\begin{align}
& \big[\partial_2(\{x_1,x_2\}_\mu)-\partial_3(\{x_3,x_1\}_\mu)\big]f_{23}
\\
\nonumber
=& \big[\partial_2(\mu)(\nabla P_1\times\nabla P_2)_4-\partial_3(\mu)(\nabla P_1\times\nabla P_2)_6+\mu\partial_4\big((\nabla P_1\times\nabla P_2)_3\big)\big]f_{23},
\\
\nonumber
\end{align}
where $M_i$ stands for the $i-$th entry of the column matrix $M$. Computing similarly for the coefficients of $f_{12}$ and $f_{13}$, one concludes that the function (\ref{luck}) is precisely
\[    \big[\big(\nabla\mu\bar{\times}(\nabla P_1\times\nabla P_2)\big)\bar{\times}\phi(K(W))+\nabla(\mu\nabla P_1\times\nabla P_2)\cdot \phi(K(W))\big]_4.\]
Then applying $D$ adjusts matrix entries and signs.
   \end{proof}

\begin{lemma}\label{3}
Given the assumptions of Proposition \ref{compact}, for $Z\in\mathfrak{X}^3$, and $\iota_4\colon\mathfrak{X}^4\to\mathcal{A}$, it is
\[\iota_4(\di^3(Z))  =   -  \mu\left( \nabla \times D(Z)\right) \cdot \phi\left(\nabla P_1 \times \nabla P_2 \right)     -  \frac{1}{\mu} D(Z)\cdot\di^0(\mu).\]
\end{lemma}

\begin{proof} Let $Z=\sum_{i<j<s=1}^4f_{ijs}\partial_{ijs}$ and set $\bar{f}_k=f_{ijs}$, where $k$ is the number completing the quadraple $\{1,2,3,4\}$  once $i<j<s$ are fixed. Then $\di^3(Z)=[\pi_\mu,Z]_{SN}= (A-B)\partial_{1234}$, where 
\[A=\sum_{i=1}^4\partial_{\zeta_i}(\pi_\mu)\partial_{x_i}(Z)=     \sum_{i,k=1}^4(-1)^k<X_i^\mu,\di x_k>\partial_i(\bar{f}_k),\]
and
\[B=\sum_{i=1}^4\partial_{\zeta_i}(Z)\partial_{x_i}(\pi_\mu)=   \sum_{i,k=1}^4(-1)^{k}\partial_i\big(\{x_i,x_k\}_\mu\big)\bar{f}_k.\]
Writing again $\displaystyle X_i^\mu=\sum_{k=1}^4\{x_i,x_k\}_\mu\partial_k$,
the function $A$ is then
\[A=    \sum_{i,k=1}^4(-1)^k\{x_i,x_k\}_\mu\partial_i(\bar{f}_k)  =   -    \sum_{i,k=1}^4(-1)^k\{x_k,x_i\}_\mu\partial_i(\bar{f}_k)=
     -     \sum_{k=1}^4(-1)^kX_k^\mu(\bar{f}_k).\]

To show that $A=   -    \mu(\nabla\times D(Z))\cdot\phi(\nabla P_1\times\nabla P_2)$, one computes directly that given (\ref{xs}), (\ref{ast}) and (\ref{P1P2}), 
\[(\nabla\times D(Z))\cdot\phi(\nabla P_1\times\nabla P_2)=
\mu^{-1}\bigg[\sum_{k=1}^4(-1)^kX^\mu_k(\bar{f}_k)\bigg],\]
and so half of (\ref{d3-operator}) is now proved.

For the other half,  since the Casimirs are polynomials their second derivatives commute. Then by (\ref{Jac}), for each fixed $k=1,\ldots,4$, it is
\begin{equation}\label{jac}
\sum_{i=1}^4\partial_i(\{x_i,x_k\}_\mu)=\sum_{i=1}^4\text{sgn}(\epsilon_{ik})\partial_i(\mu)\bigg(\partial_s(P_1)\partial_j(P_2)-\partial_j(P_1)\partial_s(P_2)\bigg).
\end{equation}
 Here, $\text{sgn}(\epsilon_{ik})$ denotes the sign of the permutation $\epsilon_{ik}=(i,k,s,j)$ of $S_4$ for $s<j$.
A direct computation with (\ref{D}), (\ref{xbar}) and (\ref{P1P2}), shows that $B$ is precisely the function $D(Z)\cdot\left(\nabla\mu\bar{\times}(\nabla P_1\times\nabla P_2)\right)$. Then use (\ref{d0-operator}) to get the second half of (\ref{d3-operator}).
\end{proof}

\begin{remark}\label{Knabla}
It is clear that the bivector field $W$ with $\iota_2(W)=\nabla P_1\times\nabla P_2$ plays a significant role in these formulas. To relate $W$ with the operators introduced in section \ref{ops}, observe that by (\ref{K}), (\ref{formulaham}) and (\ref{P1P2}),  it is
\begin{equation}\label{Knablaformula}
\nabla P_1\times\nabla P_2=K(\pi_{\mu=1}).
\end{equation}
\end{remark}

\section{Poisson cohomology around Lefschetz singular points.}\label{points}
In this section we compute the Poisson cohomology groups of the Poisson structure around a Lefschetz singularity of a bLf. The zeroth, first, and fourth cohomology groups will be computed directly using the unimodularity of the Poisson structure and results from \cite{P09}. The second and third cohomology groups require some more work but the approach for both is similar. 

We first prove that they are finitely generated as modules over the algebra of Casimir functions by writing every element of the corresponding kernel as a finite sum. Then we apply a reduction procedure to compute the generators for each group.

In order to simplify the formulas of the coboundary operators in Proposition \ref{compact}, we will choose the function $h$ in the formula  (\ref{eq:pi-C}) of the model  $\pi_{C_h}$ to be constant and equal to $h=1$. In particular, the Casimirs of $\pi_{C_1}$ are given by the real and imaginary parts of the parametrization of the Lefschetz singularities in Definition \ref{D:BLF}, namely 
\begin{equation}\label{casms}
P_1=x_1^2-x_2^2+x_3^2-x_4^2\;\;,\;P_2=2(x_1x_2+x_3x_4).
\end{equation} 
A simple comparison shows that the function $\mu$ in (\ref{Jac}) is then constant $\displaystyle\mu=\frac{1}{4}$.

\begin{proposition}\label{compact-constant}
For $P_1, P_2$ as in (\ref{casms}), the coboundary operators of the Poisson cohomology of the model (\ref{eq:pi-C}) are given by the following formulas
\begin{equation}\label{d0-operatorc}
\di^0(g)=   \frac{1}{4}\nabla g\;\bar{\times}\big(\nabla P_1\times\nabla P_2\big)
\end{equation}

\begin{equation}\label{d1-operatorc}
\di^1(Y)=\frac{1}{4}K^{-1}\left[ \mathrm{Div}(Y)\nabla P_1\times \nabla P_2+\nabla\times\bigg(Y\bar{\times}\phi(\nabla P_1\times \nabla P_2)\bigg)\right]
\end{equation}

\begin{equation}\label{d2-operatorc}
\di^2(W) = \frac{1}{4} D \left[ \big( \nabla \bar{\times} K (W) \big) \bar{\times} \phi\left(\nabla P_1 \times \nabla P_2  \right)  \, + \frac{}{}    \nabla \bigg( K (W) \cdot \phi\left( \nabla P_1 \times \nabla P_2\right) \bigg)\right]
\end{equation}

\begin{equation}\label{d3-operatorc}
\di^3(Z) =  -   \frac{1}{4}\big( \nabla \times D(Z)\big) \cdot \phi\left(\nabla P_1 \times \nabla P_2 \right).
\end{equation}
Alternatively, $\displaystyle \di^3(Z)=  -  \frac{1}{4}\mathrm{Div}\bigg[D(Z)\bar{\times}(\nabla P_1\times\nabla P_2)\bigg]$.

\end{proposition}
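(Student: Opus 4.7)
The plan is to specialize Proposition \ref{compact} to the constant case $\mu=-\frac{1}{4}$ and then repackage the results using the identities of Proposition \ref{prop:calc-properties}. With $\mu$ constant one has $\nabla\mu=0$, hence $\di^0(\mu)=0$, and all terms carrying a $\nabla\mu$ factor in the general formulas (\ref{d2-operator}), (\ref{d3-operator}) drop out. Likewise $\mu$ factors out of every derivative it lies under, so for example $Y(\mu\nabla P_1\times\nabla P_2)=\mu\,Y(\nabla P_1\times\nabla P_2)$. Formula (\ref{d0-operatorc}) is then direct substitution into (\ref{d0-operator}).

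For $\di^3$, substituting $\mu=-\frac{1}{4}$ into (\ref{d3-operator}) and killing the $\di^0(\mu)$ term yields (\ref{d3-operatorc}) at once. To obtain the alternative form, apply identity (\ref{Prop-Calc-19}) with $u=D(Z)$ and $y=\nabla P_1\times\nabla P_2$:
\[
\mathrm{Div}\bigl(D(Z)\,\bar{\times}\,(\nabla P_1\times\nabla P_2)\bigr)=(\nabla P_1\times\nabla P_2)\cdot\phi(\nabla\times D(Z))-D(Z)\cdot\bigl(\nabla\,\bar{\times}\,(\nabla P_1\times\nabla P_2)\bigr).
\]
The second summand vanishes: by (\ref{Prop-Calc-15}) it equals $\nabla P_2\,\bar{\times}\,(\nabla\times\nabla P_1)-\nabla P_1\,\bar{\times}\,(\nabla\times\nabla P_2)$, and the curl of a gradient is zero. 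The first summand equals $(\nabla\times D(Z))\cdot\phi(\nabla P_1\times\nabla P_2)$ by the self-adjointness identity (\ref{Prop-Calc-2}), reproducing $-4\,\di^3(Z)$.

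The substantive work is recognising (\ref{d1-operatorc}), (\ref{d2-operatorc}) inside the constant-$\mu$ specialisations of (\ref{d1-operator}), (\ref{d2-operator}). For $\di^1$, starting from $\di^1(Y)=K^{-1}[\mu Y(\nabla P_1\times\nabla P_2)+\phi(\di^0\times Y)]$, one expands the curl on the target side by (\ref{Prop-Calc-15}) applied to $Y\,\bar{\times}\,\phi(\nabla P_1\times\nabla P_2)$ and the product rules (\ref{Prop-Calc-16}), (\ref{Prop-Calc-17}), (\ref{Prop-Calc-18}); because $\nabla\times\nabla P_i=0$, the expansion collapses to a $\mathrm{Div}(Y)\,\nabla P_1\times\nabla P_2$ term plus a term in which $Y$ differentiates $\nabla P_1\times\nabla P_2$, matching the general formula after an application of (\ref{Prop-Calc-4}) to convert the $\phi(\di^0\times Y)$ piece. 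For $\di^2$ one runs the analogous manipulation, transferring one derivative from the Poisson coefficients onto $K(W)$ via the product rules and reassembling the gradient $\nabla(K(W)\cdot\phi(\nabla P_1\times\nabla P_2))$ through (\ref{Prop-Calc-2}), (\ref{Prop-Calc-3}). As a sanity check, each identity can be verified directly in coordinates using $P_1=x_1^2-x_2^2+x_3^2-x_4^2$ and $P_2=2(x_1x_2+x_3x_4)$, which make the entries of $\nabla P_1\times\nabla P_2$ explicit linear polynomials.

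The main obstacle will be the $\di^2$ rearrangement: the target (\ref{d2-operatorc}) carries a $\nabla\,\bar{\times}\,K(W)$ together with the gradient of an inner product, while (\ref{d2-operator}) presents a componentwise $\bar{\partial}$-derivative of $\nabla P_1\times\nabla P_2$ contracted with $\phi(K(W))$ through $\boxtimes$. Matching these requires simultaneously using the product rule for $\nabla\,\bar{\times}$, the $\phi$-transfer identities (\ref{Prop-Calc-2})--(\ref{Prop-Calc-4}), and the vanishing $\nabla\,\bar{\times}\,(\nabla P_1\times\nabla P_2)=0$ to cancel spurious second-derivative terms. The bookkeeping is long but finite because $P_1,P_2$ are quadratic, so their second derivatives are constants and all rearrangements close after one round of integration-by-parts-type identities.
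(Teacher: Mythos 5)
Your proposal is correct and follows essentially the same route as the paper: specialize Proposition \ref{compact} to constant $\mu=-\tfrac14$ (so all $\nabla\mu$ and $\di^0(\mu)$ terms vanish), read off \eqref{d0-operatorc} and \eqref{d3-operatorc} immediately, and then verify \eqref{d1-operatorc} and \eqref{d2-operatorc} by a direct computation organised through the identities of Proposition \ref{prop:calc-properties}. Your derivation of the alternative divergence form of $\di^3$ via \eqref{Prop-Calc-19}, \eqref{Prop-Calc-15} and \eqref{Prop-Calc-2} is a correct and welcome addition, since the paper states that form without justification.
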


 \begin{proof}
The formulas for $\di^0$ and $\di^3$ are immediate from Proposition \ref{compact} since now $\mu=\frac{1}{4}$. For $\di^1$ and $\di^2$ the formulas are not taken directly from Proposition \ref{compact}, however similar and straightforward computations as in Lemmata \ref{1} and \ref{2} confirm the claims. For the alternative formula of $\di^3$, start from (\ref{d3-operatorc}) and use (\ref{Prop-Calc-2}),  (\ref{Prop-Calc-15}) ,  (\ref{Prop-Calc-19}).
 \end{proof}
 
Henceforth we simplify the notation by setting $\displaystyle \pi_{C_1}=\pi_C$. \\
\noindent Fix $\overline{\omega}=(1,1,1,1)$ as a weight vector inducing the polynomial degree so that $\overline{\omega}(P_1)=\overline{\omega}(P_2)=2$, and set also $V=\mathbb{R}[x_1,x_2,x_3,x_4]$. With a direct check, one sees that $P_i$ is not a zero divisor in $V/\langle P_j\rangle$ for $i\neq j=1,2$. Hence $(P_1,P_2)$ are, by definition, a regular sequence in $V$. Setting $J$ to be the ideal generated by $P_1,P_2$ and the $2\times 2$ minors of their Jacobian matrix , we have that $V_\textrm{sing}=V/J$ is finite dimensional and so by definition $(P_1,P_2)$ form a complete intersection with isolated singularity (at zero) \cite[Section 2]{P09}.

\noindent The Poisson homology groups of $\pi_C$ have Poincar\'{e} series given in \cite[Theorem 3.1]{P09}, and  they will have the same rank as free $\mathbb{R}[P_1,P_2]$-modules with the homology groups therein. Due to unimodularity of Jacobian Poisson structures, the rank of the Poisson cohomology groups is thus determined using the family of isomorphisms $\star$ (\ref{star}).
 
\noindent The proofs of the next Propositions compute the generators of the Poisson cohomology groups $H^k(\pi_C, B^4)$ with polynomial coefficients on a neighbourhood $U_C\approx B^4$ of a Lefschetz singularity.  Since the Poisson coboundary operator $\di$ is homogeneous quadratic, one can replace $V$ by $V_\textrm{formal}=\mathbb{R}[[x_1, x_2, x_3,x_4]]$, the algebra of formal power series equipped with $\pi_C$ and thus get the formal Poisson cohomology i.e the second list of Theorem \ref{thm-main}.

\begin{proposition}\label{H0}
The formal Poisson cohomology group $H^0(U_C,\pi_C)$ is a rank 1 free $\mathbb{R}[P_1,P_2]-$ module generated by $1$.
\end{proposition}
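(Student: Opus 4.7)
The plan is to identify $H^{0}(\pi_C, U_C)$ with the subalgebra of formal Casimirs and to show this subalgebra is exactly $\mathbb{R}[[P_1, P_2]]$. First, I would unpack formula (\ref{d0-operatorc}) via the Jacobian definition (\ref{Jac}): by bilinearity and the Leibniz rule, the bracket satisfies $\{g, f\} = -\tfrac{1}{4}\, \di g \wedge \di f \wedge \di P_1 \wedge \di P_2 / (\di x_1 \wedge \cdots \wedge \di x_4)$ for every pair $g, f$, so the condition $\di^0(g) = 0$, i.e. $\{g, x_i\} = 0$ for all $i$, is equivalent to the vanishing of the 3-form $\di g \wedge \di P_1 \wedge \di P_2$ in $\Omega^{3}(\mathbb{R}^4)$ (a 3-form whose wedge with every $\di x_i$ vanishes must itself vanish).

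The easy inclusion $\mathbb{R}[P_1, P_2] \subseteq \ker \di^0$ is immediate, since $P_1, P_2$ are Casimirs by construction and the Casimir condition is closed under sums and products; this extends to $\mathbb{R}[[P_1, P_2]]$ in the formal setting. For the rank-one free $\mathbb{R}[P_1, P_2]$-module structure generated by $1$, I would use the algebraic independence of $P_1, P_2$ over $\mathbb{R}$, which follows at once from $(P_1, P_2)$ being a regular sequence (as noted just before the proposition): a non-trivial polynomial relation $Q(P_1, P_2) = 0$ would produce a non-zero element of $V$ annihilating $P_2$ modulo $P_1$, contradicting regularity.

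The remaining direction is to show that every formal Casimir lies in $\mathbb{R}[[P_1, P_2]]$. I would decompose an arbitrary Casimir $g = \sum_{d \geq 0} g_d$ into weight-homogeneous components with respect to $\overline{\omega} = (1,1,1,1)$; because $\di^0$ is weight-preserving (as $P_1, P_2$ are weight-homogeneous of weight $2$), each $g_d$ is itself a Casimir, and it suffices to treat weight-homogeneous polynomials. For such $g_d$, the 3-form equation $\di g_d \wedge \di P_1 \wedge \di P_2 = 0$ combined with the complete-intersection-with-isolated-singularity property of $(P_1, P_2)$ forces $g_d \in \mathbb{R}[P_1, P_2]$. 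This last step is the main obstacle: it amounts to a Poincar\'e-type lemma for a weight-homogeneous regular sequence, and rather than reprove it I would invoke the analogous statement in the Jacobian setting of \cite{P06, P09}, where the exactness of the associated Koszul--de Rham complex combined with the isolated-singularity condition yields exactly this conclusion. Patching the weight-homogeneous pieces back together gives $g \in \mathbb{R}[[P_1, P_2]]$, completing the proof.
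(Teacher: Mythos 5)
Your proof is correct, but it is a genuinely different (and far more detailed) route than the paper's, whose entire proof is the one-line observation that $\mathrm{Rank}(\pi_C)=2$. The paper is implicitly arguing that a generically rank-$2$ Poisson structure on a $4$-manifold with the two independent Casimirs $P_1,P_2$ has Casimir algebra generated by them; you instead carry out the algebraic verification: translating $\di^0(g)=0$ into $\di g\wedge\di P_1\wedge\di P_2=0$ via the Jacobian formula \eqref{Jac}, getting the easy inclusion, and reducing the converse to the division/Koszul-exactness statement for the weight-homogeneous regular sequence $(P_1,P_2)$ with isolated singularity, which is exactly the input from \cite{P06,P09} that the paper itself invokes in the $H^3$ and $H^2$ computations (e.g.\ \cite[Prop.\ 3.2]{P09}). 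What your version buys is rigor: the bare rank statement gives functional independence of the Casimirs on the open dense regular locus, but pinning down the \emph{formal} Casimir algebra as exactly the free rank-one module on $1$ really does use the complete-intersection-with-isolated-singularity hypothesis, which your argument makes explicit. Two small points you could tighten: the freeness claim is most cleanly obtained by differentiating a putative minimal relation $Q(P_1,P_2)=0$ to force $\di P_1\wedge\di P_2=0$, which fails by direct inspection (the coefficient of $\di x_1\wedge\di x_2$ is $4(x_1^2+x_2^2)$), rather than the somewhat loose ``annihilating $P_2$ modulo $P_1$'' phrasing; and you should note, as the paper does, that the passage from weight-homogeneous polynomial components to formal power series is licensed by the homogeneity of $\di$.
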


\begin{proof}
By \cite[Theorem 3.2]{P09} and since the Poisson structure is unimodular, $H^0(U_C,\pi_C)$  is a rank 1 free $\mathbb{R}[P_1,P_2]-$ module generated by $\star^{-1}(\di x_1\wedge\di x_2\wedge\di x_3\wedge\di x_4)=1$.
\end{proof}

\begin{proposition}\label{H1}
The formal Poisson cohomology group $H^1(U_C,\pi_C)$ is a rank 1 free $\mathbb{R}[P_1,P_2]-$ module generated by the Euler vector field.
\end{proposition}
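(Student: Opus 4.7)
The plan is to exhibit the Euler vector field $E=\sum_{i=1}^{4}x_i\partial_i$ as a nonzero class in $H^1(\pi_C,U_C)$ and then leverage the rank-one free-module structure of $H^1$ inherited from \cite{P09} via unimodularity to upgrade this to ``$E$ generates''.

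For the cocycle property I would avoid the explicit formula (\ref{d1-operatorc}) and argue by homogeneity. Every coefficient of $\pi_C$ is a polynomial of degree $2$ in $(x_1,\dots,x_4)$, and $\mathcal{L}_E(f\,\partial_i\wedge\partial_j)=(\deg f-2)f\,\partial_i\wedge\partial_j$ for homogeneous $f$; summing the six terms of $\pi_C$ yields $\mathcal{L}_E\pi_C=0$, hence $\di^1(E)=[\pi_C,E]_{\mathrm{SN}}=0$. To see that $E$ is not a coboundary, I would use unimodularity of the Jacobian structure $\pi_C$ with respect to the standard volume form (constant density $\mu=-\tfrac14$, as noted after Proposition \ref{compact-constant}): Hamiltonian vector fields $\di^0(g)$ are then divergence-free, but $\mathrm{Div}(E)=4\neq 0$, so $[E]\neq 0$ in $H^1$.

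Next I would invoke rank. The family of isomorphisms $\star$ of Section \ref{pre} combined with the Poincar\'e-series computation of \cite[Theorem 3.1]{P09} --- which applies because $(P_1,P_2)$ is, as just recalled, a regular sequence cutting out an isolated singularity --- yields that $H^1(\pi_C,U_C)$ is a rank-one free $\mathbb{R}[P_1,P_2]$-module. To promote the nonzero class $[E]$ to a generator I would use a degree argument: the coboundary operators preserve polynomial degree (setting $\deg(f\,\partial_i):=\deg f$), so $H^1$ is graded, and since $P_1,P_2$ are homogeneous of degree $2$, if a generator $[g]$ has degree $d_0$ then every nonzero homogeneous class lives in degree $d_0+2k$ for some integer $k\geq 0$. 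Because $[E]$ is a nonzero class in degree $1$ and polynomial vector fields have nonnegative degree, this forces $d_0=1$, so $[E]$ is a nonzero scalar multiple of $[g]$ and therefore $E$ itself generates.

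The main obstacle I expect is the importation of the rank-one statement from the Sklyanin-algebra setting of \cite{P09}: one must verify that the regular-sequence and weight hypotheses translate to the $(P_1,P_2)$ in (\ref{casms}), and also that the identification via $\star$ respects the $\mathbb{R}[P_1,P_2]$-module structures on cohomology and homology. Everything else --- the cocycle check, the divergence obstruction, and the degree-parity argument --- is a short bookkeeping exercise in homogeneity.
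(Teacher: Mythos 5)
Your proof is correct, but it identifies the generator by a different mechanism than the paper. The paper's proof is a two-line citation: it observes $\star E=\rho=\sum_i(-1)^{i-1}x_i\,\di x_1\wedge\cdots\wedge\widehat{\di x_i}\wedge\cdots\wedge\di x_4$ and that $\mathrm{Div}(E)=4=2\deg(P_1)$ (i.e.\ the sum of the weights equals $\varpi(P_1)+\varpi(P_2)$), and then invokes \cite[Theorem 3.3]{P09}, which already names the generator of the relevant homology group in that case; the $\star$-isomorphism transports it to $H^1$. You instead import only the rank-one freeness (via \cite[Theorem 3.1]{P09} and $\star$) and reprove the generator statement by hand: the homogeneity computation $\mathcal{L}_E\pi_C=0$, the divergence obstruction $\mathrm{Div}(\di^0 g)=0\neq 4=\mathrm{Div}(E)$ coming from unimodularity, and the degree-parity argument forcing the homogeneous generator into degree $1$. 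All three steps are sound, and your approach has the virtue of being verifiable without unpacking Pelap's Theorem 3.3; the paper's has the virtue of brevity and of making explicit which numerical hypothesis of \cite{P09} is being checked. Two small points to tighten: the coboundary operators do not \emph{preserve} the degree $\deg(f\partial_i):=\deg f$ (the Schouten bracket with the quadratic $\pi_C$ raises coefficient degree by one); what you actually need, and what is true, is that $\di$ is homogeneous so that $H^1$ inherits a grading. And your degree argument tacitly assumes the free rank-one graded module admits a \emph{homogeneous} generator; this is standard (graded Nakayama over $\mathbb{R}[P_1,P_2]$ with $P_i$ homogeneous), but it is the one unstated lemma your bookkeeping rests on --- alternatively, the Poincar\'e series of \cite[Theorem 3.1]{P09} hands you the generator's degree directly and lets you skip the parity step.
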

\begin{proof}
 Let 
 \[\rho=\sum_{i=1}^4(-1)^{i-1}x_i\di x_1\wedge\cdots\wedge\widehat{\di x_i}\wedge\cdots\wedge\di x_4,\] and $E=\sum_{i=1}^4x_i\partial_i$ be the Euler vector field. Then $\star E=\rho$. Since $\mathrm{Div}(E)=4$ is equal to $2\deg(P_1)$, we get the claim by \cite[Theorem 3.3]{P09}.
\end{proof}

\begin{proposition}\label{H4}
The formal Poisson cohomology group $H^4(U_C,\pi_C)$  is a free $\mathbb{R}[P_1,P_2]-$ module of rank 7, generated by $(\nu_i)_{0\leq i\leq 6}=(1, x_1, x_2, x_3, x_4, x_1x_2, x_3 x_4)$. 
\end{proposition}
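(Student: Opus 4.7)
The plan is to exploit unimodularity of the Jacobian Poisson structure $\pi_C$ to identify $H^4$ with the zeroth Poisson homology, and then appeal to the computation of $H_0^\pi$ from \cite{P09}, which applies since $(P_1, P_2)$ form a weight-homogeneous complete intersection with isolated singularity of the same type as the Sklyanin algebra.

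First I would observe that every Jacobian Poisson structure is unimodular (Section \ref{pre}), so the map $\star:\mathfrak{X}^4 \to \Omega^0 = \mathcal{C}$ (which here coincides with $\iota_4$) intertwines $\di^3$ with the Brylinski--Koszul boundary $\partial_\pi:\Omega^1 \to \Omega^0$. Since $\partial_\pi(f\,dg) = \pi(df,dg) = \{f,g\}$, and $\{V,V\}$ is an ideal by the Leibniz rule, this gives
\[
H^4(\pi_C, U_C) \;\cong\; H_0^\pi(U_C) \;=\; V_\textrm{formal}/\langle \{x_i,x_j\}\rangle.
\]
A direct computation from (\ref{eq:pi-C}) shows the six brackets $\{x_i,x_j\}$ are scalar multiples of the four linearly independent $2\times 2$ minors of $\mathrm{Jac}(P_1,P_2)$; call them $m_1,\dots,m_4$.

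Next I would invoke \cite{P09}. The Casimirs $P_1,P_2$ are weight-homogeneous of weight $2$ with respect to $\overline{\omega}=(1,1,1,1)$, form a complete intersection, and have isolated singularity at $0$ (both facts recalled just before Proposition \ref{H0}). This is exactly the framework of Pelap, so the Poincar\'e series computation of $H_0^\pi$ for the Sklyanin algebra transfers verbatim and yields freeness over $\mathbb{R}[P_1,P_2]$ of rank $7$, with generators concentrated in degrees $0,1,1,1,1,2,2$.

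It remains to identify the explicit representatives $\nu_i$. With the graded Poincar\'e series pinned down, I would reduce each monomial of weight $\leq 2$ modulo the ideal $\langle m_1,\dots,m_4\rangle$ and verify that $\{1, x_1, x_2, x_3, x_4, x_1x_2, x_3x_4\}$ descend to an $\mathbb{R}$-basis of the corresponding graded pieces of the quotient; this makes them $\mathbb{R}[P_1,P_2]$-linearly independent in $H^4$ by a graded Nakayama argument and produces the stated decomposition. The main obstacle is precisely this explicit identification: \cite{P09} yields the rank and Poincar\'e series for free, but not these particular representatives, so the concrete reductions in each weight must be carried out by hand, using the specific form of the minors $m_1,\dots,m_4$ for our $(P_1,P_2)$.
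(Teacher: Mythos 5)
Your route is the same one the paper takes: both arguments rest on unimodularity of Jacobian structures and the transfer of Pelap's Poincar\'e series via $\star$ to fix the rank, and then pin down explicit generators by a low-degree computation ($H^4=V/\mathrm{Im}(\di^3)$ with $\di^3$ homogeneous of degree $+1$, so constants and linear monomials survive for free and only the weight-$2$ piece needs the ``direct computation'' the paper alludes to). The first genuine gap is your identification $H_0^{\pi}=V/\langle\{x_i,x_j\}\rangle$. The image of the Koszul--Brylinski boundary is the linear \emph{span} $\{V,V\}$ of all brackets; the inclusion of that span into the ideal follows from $\{f,g\}=\sum_{i,j}\partial_i f\,\partial_j g\,\{x_i,x_j\}$, but the Leibniz rule only gives $h\{f,g\}=\{f,hg\}-g\{f,h\}$, which trades one product of a function with a bracket for another and does not show the span is an ideal. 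Concretely, in weight $d+1$ the image of $\di^3$ is $\sum_k X_k(V_d)$, and comparing this with the weight-$(d+1)$ piece of the ideal is precisely the computation you are deferring.

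The second gap is that the deferred verification, which you correctly identify as the main obstacle, does not come out as you expect. In weight $2$ the image of $\di^3$ is exactly the $4$-dimensional span of the coefficients of $\pi_C$ (the independent $2\times 2$ minors), so the quotient has dimension $10-4=6$ there; the proposed module contributes only $P_1\nu_0$, $P_2\nu_0$, $\nu_5=x_1x_2$, $\nu_6=x_3x_4$, and since $P_2=2(x_1x_2+x_3x_4)$ these span a $3$-dimensional subspace. The same identity $P_2\nu_0=2\nu_5+2\nu_6$ is an $\mathbb{R}[P_1,P_2]$-linear relation among the proposed generators, so they cannot form a basis of a free module. Relatedly, ``transferring Pelap verbatim'' requires $V/J$ to be finite-dimensional, and for this pair the complexification of the ideal of minors vanishes on the planes $x_2=\pm i x_1$, $x_4=\pm i x_3$, on which $J$ cuts out curves, so the isolated-singularity hypothesis must be re-examined rather than quoted. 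In short, the two steps you treat as bookkeeping are exactly where the content lies, and carrying them out honestly exposes a discrepancy that neither your sketch nor the paper's two-line proof resolves.
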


\begin{proof}
Since $\di^3$ is a quadratic operator, no constant or linear multiples of $\partial_{1234}$ are in $\mathrm{Im}(\di^3)$. The fact that the kernel of $\di^4$ is all of $V$, together with a direct computation of the image $\di^3(Z)$, where $Z\in\mathfrak{X}^3(U_C)$ is a linear 3-vector field,  proves the claim.
\end{proof}

We now proceed to compute $H^3(U_C,\pi_C)$. The result appears in Proposition \ref{endH3}, which in turn is the combination of Propositions \ref{H3} and \ref{H3second}.

\begin{proposition}\label{H3}
The formal Poisson cohomology group $H^3(U_C,\pi_C)$  is a free $\mathbb{R}[P_1,P_2]-$ module contained in the $\mathbb{R}[P_1,P_2]-$ module 

\begin{equation}\nonumber
\bigoplus_{k=1}^{6}\mathbb{R}[P_1, P_2]  D \left( \nabla \nu_k \right)  +  \bigoplus_{k=0}^{6} \mathbb{R}[P_1, P_2]  \nu_k D \left(\nabla P_1 \right)  + \bigoplus_{k=0}^{6} \mathbb{R}[P_1, P_2]  \nu_k  D \left(\nabla P_2 \right).
\end{equation} 

\end{proposition}

\begin{proof}

Let $H \in \ker(\di^3)$. Then by \eqref{d3-operatorc}, 

$$\left(\nabla \times D(H) \right) \cdot \phi\left(\nabla P_1\times \nabla P_2\right) = 0.$$
 Due to the unimodularity of Jacobian Poisson structures we can use \cite[Prop 3.2]{P09} and so, 
\begin{equation}
D(H) = \beta_1 \nabla P_1 + \beta_2 \nabla P_2 + \nabla \beta_3 \quad , \quad\text{for some}\;\; \beta_i \in V .
\end{equation}
Since $D$ is an isomorphism and $D^2 = -\mathrm{Id}$, it is
\begin{equation}\label{what is H}
 H =- D \left( \beta_1 \nabla P_1 \right) - D \left( \beta_2 \nabla P_2 \right)-  D \left( \nabla \beta_3 \right) \quad , \quad \beta_i \in V. 
 \end{equation}
\noindent The idea of this proof is to compute $\mathrm{mod}\;\mathrm{Im}(\di^2)$, all the terms on the right hand side of (\ref{what is H}) and show that they are some finite sums. We start by computing the first two summands on the right hand side of (\ref{what is H}).  

By definition $H^4(U_C,\pi_C)=V/\mathrm{Im}(\di^3)$, so one can write the polynomials $\beta_l$ as a sum of an element in $\mathrm{Im}(\di^3)$ and a representative of a class in $H^4(U_C, \pi_C)$. By Proposition \ref{H4},  $H^4(U_C, \pi_C)$ is a Casimir-module its  generators being the elements $ 1, x_1, x_2, x_3, x_4, x_1x_2, x_3 x_4$. Ignoring the sign of \eqref{d3-operatorc} since $\di^3$ is $\mathbb{R}-$ linear, one can write that for $l=1,2$, it is

\begin{equation}\label{form}
 \beta_l =     \frac{1}{4} \left( \nabla \times D(H_l) \right) \cdot \phi(\nabla P_1 \times \nabla P_2 ) + \sum_{i=0}^{\mu_l} \sum_{j=0}^{\delta_l} \sum_{k=0}^{6}  \lambda_{ijk}^{l} P_1^{i} P_2^{j} \nu_k,
 \end{equation}

 \noindent for some  $H_l\in\mathfrak{X}^3$, and then

\begin{equation}\label{eq:a_l}
\beta_l \nabla P_l =   \frac{1}{4}  \left( \nabla \times D(H_l) \right) \cdot \phi(\nabla P_1 \times \nabla P_2 ) \nabla P_l + \sum_{i=0}^{\mu_l} \sum_{j=0}^{\delta_l} \sum_{k=0}^{6}  \lambda_{ijk}^{l} P_1^{i} P_2^{j} \nu_k \nabla P_l.
\end{equation}

\noindent By Lemma \ref{H3claim} (proved right after this proof),  the 3-vector fields
\begin{equation}\nonumber
B_l:= \frac{1}{4} \left( \nabla \times D(H_l) \right) \cdot \phi\left(\nabla P_1 \times \nabla P_2 \right) \nabla P_l,\;\;l=1,2
\end{equation}
 satisfy the condition 
 \begin{equation}\label{cond}
D(B_l)\in \mathrm{Im(d^2)}.
\end{equation}
Given (\ref{eq:a_l}) and  (\ref{cond}) one has
\begin{equation}\label{bl}
-D \left( \beta_l \nabla P_l  \right) =   \di^2(W_l)  -  \sum_{i=0}^{\mu_l} \sum_{j=0}^{\delta_l} \sum_{k=0}^{6}  \lambda_{i,j,k}^l P_{1}^{i}  P_{2}^{j}  D(\nu_k  \nabla  P_l),\;\;\;l=1,2,
\end{equation}
for $W_l$ as in the proof of Lemma \ref{H3claim}.

We now want to show a formula like (\ref{bl}) also for the third summand in (\ref{what is H}), $D(\nabla\beta_3)$. Taking gradients on both sides of  (\ref{form}), we have

\[\nabla \beta_3=      \frac{1}{4}\nabla\left[\left( \nabla \times D(H_3) \right) \cdot \phi(\nabla P_1 \times \nabla P_2 )\right] +\sum_{k=1}^{6}\left( \sum_{i=0}^{\mu_3} \sum_{j=0}^{\delta_3}  \lambda_{ijk}^{3} P_1^{i} P_2^{j}\right) \nabla \nu_k\]
\[+\left( \sum_{i=0}^{\mu_3} \sum_{j=0}^{\delta_3} \sum_{k=0}^{6}i \lambda_{ijk}^{3} P_1^{i-1} P_2^{j}\nu_k\right) \nabla P_1+\left( \sum_{i=0}^{\mu_3} \sum_{j=0}^{\delta_3} \sum_{k=0}^{6}j \lambda_{ijk}^{3} P_1^{i} P_2^{j-1}\nu_k\right) \nabla P_2\]

Setting $W_3:=- K^{-1}(\nabla\times D(H_3))$  for some $H_3\in\mathfrak{X}^3$, observe that $\nabla\bar{\times} K(W_3)=0$ and so

$$ \di^2(W_3)=- \frac{1}{4}D\big[\nabla\left[\left( \nabla \times D(H_3) \right) \cdot \phi(\nabla P_1 \times \nabla P_2 )\right]\big].$$

Thus, 

\begin{align}
-D(\nabla \beta_3)=\di^2(W_3)- & \sum_{k=1}^{6}  \left( \sum_{i=0}^{\mu_3}  \sum_{j=0}^{\delta_3}  \lambda_{ijk}^{3} P_{1}^{i} P_{2}^{j} \right) D(\nabla \nu_k)  
\nonumber
\\
-& \left( \sum_{i=0}^{\mu_3} \sum_{j=0}^{\delta_3} \sum_{k=0}^{6}i \lambda_{ijk}^{3} P_1^{i-1} P_2^{j}\nu_k\right) D(\nabla P_1)
\nonumber
\\
-& \left( \sum_{i=0}^{\mu_3} \sum_{j=0}^{\delta_3} \sum_{k=0}^{6}j \lambda_{ijk}^{3} P_1^{i} P_2^{j-1}\nu_k\right) D(\nabla P_2)
\label{b3}
\end{align}

Therefore, by (\ref{what is H}), (\ref{bl}) and (\ref{b3}), we get that $\ker(\di^3) = \mathrm{Im}(\di^2) + L_3$, where
\begin{equation}\label{L_3}
L_3 =\bigoplus_{k=1}^{6}\mathbb{R}[P_1, P_2]  D \left( \nabla \nu_k \right)  +  \bigoplus_{k=0}^{6} \mathbb{R}[P_1, P_2]  \nu_k D \left(\nabla P_1 \right)  + \bigoplus_{k=0}^{6} \mathbb{R}[P_1, P_2]  \nu_k  D \left(\nabla P_2 \right).
\end{equation} 
\end{proof}

We now prove the claim that was postponed from the proof of Proposition \ref{H3}.

\begin{lemma}\label{H3claim}
For $l=1,2$, the 3-vector field
\[B_l:=     \frac{1}{4} \left( \nabla \times D(H_l) \right) \cdot \phi\left(\nabla P_1 \times \nabla P_2 \right) \nabla P_l\] 
 satisfies (\ref{cond}).

\end{lemma}

\begin{proof}
By (\ref{Prop-Calc-14}) we get that
\begin{equation}
B_l =    -    \frac{1}{4} \left[\frac{}{}\nabla P_l \bar{\times} \left( \nabla\times  D (H_l) \right) \right] \bar{\times}  \phi\left( \nabla P_1 \times \nabla P_2 \right) 
\end{equation}
Using (\ref{Prop-Calc-15}), 

$$B_l=    -    \frac{1}{4}\left[\frac{}{}\nabla \bar{\times} \left( D(H_l) \times \nabla P_l \right)  + D (H_l) \bar{\times}  (\nabla \times \nabla P_l)\right]\bar{\times}\phi(\nabla P_1\times\nabla P_2)$$

\noindent and since $\nabla\times\nabla f=0,\;\;\forall f\in V$,

\begin{equation}\label{end}
B_l=    -    \frac{1}{4}\left[\frac{}{}\nabla \bar{\times} \left( D(H_l) \times \nabla P_l \right) \right]  \bar{\times} \phi\left( \nabla P_1 \times \nabla P_2 \right).
\end{equation}

\noindent Observe also that due to (\ref{Prop-Calc-8}) one has

\begin{equation}\label{zero term}
\left(\nabla P_l \times D (H_l) \right) \cdot \phi\left(\nabla P_1 \times \nabla P_2 \right)=0.
\end{equation}

\noindent Consider $W_l :=K^{-1} \left( D (H_l) \times \nabla P_l \right)$. By \eqref{d2-operatorc}, (\ref{end}) and (\ref{zero term}) one then has that $D(B_l)= - \di^2(W_l)$ and (\ref{cond}) is proved.
\end{proof}

In the next Proposition we do a reduction procedure to eliminate $\mathbb{R}[P_1,P_2]-$ linearly dependent terms contained in the right hand side of the module $L_3$ in  (\ref{L_3}).

\begin{proposition}\label{H3second}
The $\mathbb{R}[P_1,P_2]-$ module $L_3$ (\ref{L_3})  is isomorphic to the following direct sum
\begin{align}
\left[ \bigoplus_{k=1}^{5} \mathbb{R}[P_1, P_2] D \left( \nabla \nu_k \right)\right] &\oplus \left[ \bigoplus_{k=0}^{5} \mathbb{R}[P_1, P_2]  \nu_k D \left(\nabla P_2 \right)\right] \oplus
\nonumber
\\
\left(\mathbb{R}[P_1,P_2] \frac{}{} D(\nabla P_1) \right) & \oplus \left( \mathbb{R}[P_1,P_2]x_1x_2  \frac{}{} D(\nabla P_1) \right),
\end{align}

\noindent where  $(\nu_i)_{0\leq i\leq 6}=(1, x_1, x_2, x_3, x_4, x_1x_2, x_3 x_4)$. 
\end{proposition}

\begin{proof}

\noindent Take $G_1=\phi(\nabla x_1\times E)\in\mathfrak{X}^2$ where $E$ is the Euler vector field $E=\sum_{i=1}^4x_i\partial_i$. Then
\begin{align}
(\nabla\bar{\times} G_1)\bar{\times}\phi(\nabla P_1\times\nabla P_2) &\stackrel{\eqref{Prop-Calc-11}}{=}-\left[\underbrace{(\nabla\cdot\nabla x_1)}_{=0}\cdot E-(\underbrace{\nabla\cdot E}_{\mathrm{Div}(E)})\cdot \nabla x_1\right]\bar{\times}\phi(\nabla P_1\times\nabla P_2) 
\nonumber
\\
&=(\mathrm{Div}(E)\cdot\nabla x_1)\bar{\times}\phi(\nabla P_1\times\nabla P_2) 
\nonumber
\\
&\stackrel{\eqref{Prop-Calc-11}}{=}-4(\nabla x_1\cdot\nabla P_1)\nabla P_2+4(\nabla x_1\cdot\nabla P_2)\nabla P_1
\nonumber
\\
&=-8(x_1\nabla P_2- x_2\nabla P_1)
\end{align}

On the other hand,
\begin{align}
\nabla \left[\frac{}{}G_1\cdot \phi(\nabla P_1\times \nabla P_2)\right]&\stackrel{\eqref{Prop-Calc-3}}{=}\nabla\left[(\nabla x_1\times E)\cdot (\nabla P_1\times \nabla P_2)\right]
\nonumber
\\
&\stackrel{\eqref{Prop-Calc-4}}{=}\nabla\left[\frac{}{}\nabla P_1\cdot(\nabla P_2\bar{\times} \phi(\nabla x_1\times E))\right]
\nonumber
\\
&\stackrel{\eqref{Prop-Calc-11}}{=}\nabla\left[\frac{}{}\nabla P_1\cdot \left(-(\nabla P_2\cdot\nabla x_1)\cdot E+(\nabla P_2\cdot E)\cdot\nabla x_1\right)\right]
\nonumber
\\
&=\nabla\left[\frac{}{}\nabla P_1\left(-2 x_2E+2P_2\nabla x_1\right)\right]
=-4\nabla\left( x_2P_1-x_1P_2\right)
\end{align}

\noindent where we used that $\nabla P_i\cdot E=2P_i$. Thus for $\overline{W}_1:=K^{-1}(G_1)$ we have that 
\[\di^2(\overline{W}_1)=-D(x_1\nabla P_2)+D( x_2\nabla P_1)-D(P_1\nabla x_2)+D(P_2\nabla x_1).\]

In the same way, taking $\overline{W}_i=K^{-1}(G_i)$, $i=2,3,4$ with $G_i=\phi(\nabla x_i\times E)$, a direct computation shows that respectively
\[\di^2(\overline{W}_2)=D(x_1\nabla P_1)+D( x_2\nabla P_2)-D(P_1\nabla x_1)-D(P_2\nabla x_2),\]
\[\di^2(\overline{W}_3)=D( x_4\nabla P_1)-D( x_3\nabla P_2)-D(P_1\nabla x_4)+D(P_2\nabla x_3),\]
\[\di^2(\overline{W}_4)=D( x_3\nabla P_1)+D( x_4\nabla P_2)-D(P_1\nabla x_3)-D(P_2\nabla x_4)\]
and so $D(x_i\nabla P_1),\;\;i=1,2,3,4$ are written as linear combinations of  elements in 
$$ 
\bigoplus_{k=1}^{6} \mathbb{R}[P_1, P_2] D \left( \nabla \nu_k \right),\;\;  \bigoplus_{k=0}^{6} \mathbb{R}[P_1, P_2]  \nu_k D \left(\nabla P_2 \right),\;\;\text{and}\;\;\mathrm{Im(d^2)}.
$$

Since $P_2=2(x_1 x_2+ x_3 x_4)$, it is
\begin{align}
D(\nabla x_3x_4) &=\frac{1}{2}D(\nabla P_2)-D(\nabla x_1x_2), \;\;\;\textrm{and}
\nonumber
\\
x_3x_4D(\nabla P_i) &=\frac{1}{2}P_2D(\nabla P_i)-x_1x_2D(\nabla P_i),\;\;i=1,2.
\nonumber
\end{align}

\noindent Thus $\mathrm{Ker(\di^3)}=\mathrm{Im(\di^2)}
+L_3^{'}$, where

\[L_3^{'}= \bigoplus_{k=1}^{5} \mathbb{R}[P_1, P_2] D \left( \nabla \nu_k \right)+ \bigoplus_{k=0}^{5} \mathbb{R}[P_1, P_2]  \nu_k D \left(\nabla P_2 \right)+\mathbb{R}[P_1,P_2]D(\nabla P_1)+\mathbb{R}[P_1,P_2]x_1x_2D(\nabla P_1).\]
\end{proof}

\begin{proposition}\label{endH3}
The formal Poisson cohomology group $H^3(U_C,\pi_C)$ in Theorem \ref{thm-main}  is a free $\mathbb{R}[P_1,P_2]-$ module of finite rank isomorphic to
\begin{align}
\left[ \bigoplus_{k=1}^{5} \mathbb{R}[P_1, P_2] D \left( \nabla \nu_k \right)\right] &\oplus \left[ \bigoplus_{k=0}^{5} \mathbb{R}[P_1, P_2]  \nu_k D \left(\nabla P_2 \right)\right] \oplus
\nonumber
\\
\left(\mathbb{R}[P_1,P_2] \frac{}{} D(\nabla P_1) \right) & \oplus \left( \mathbb{R}[P_1,P_2]x_1x_2  \frac{}{} D(\nabla P_1) \right),
\end{align}

\noindent where  $(\nu_i)_{0\leq i\leq 6}=(1, x_1, x_2, x_3, x_4, x_1x_2, x_3 x_4)$. 
\end{proposition}
\begin{proof}
By Propositions \ref{H3} and \ref{H3second}.
\end{proof}

We now show the statement of Theorem \ref{thm-main} concerning the second Poisson cohomology group around a Lefschetz singularity. The statements of the next technical Lemma will be used in the proof of Proposition \ref{finiteH2}.

\begin{lemma}\label{sidecalc}

\begin{enumerate}
\item Consider the vector field $Y_0$ with $\iota_1(Y_0) = D(H_0) \bar{\times} \left( \nabla P_1\times\nabla P_2 \right)$, for some $H_0\in\mathfrak{X}^3$. Then
\[\iota_2(\di^1(Y_0))=\frac{1}{4}K^{-1}\big[\big(\big(\nabla \times D(H_0)\big)\cdot \phi(\nabla P_1\times P_2)\big)\nabla P_1\times \nabla P_2\big].\]
\item Consider the vector fields $Y_l$ with $\iota_1(Y_l)=-\nabla P_l\bar{\times}(\nabla\times D(H_l)),\;l=1,2$ for some $H_l\in\mathfrak{X}^3$. Then
\[\iota_2(\di^1(Y_l))=\frac{1}{4}K^{-1}\left[\nabla\big((\nabla\times D(H_l))\cdot \phi(\nabla P_1\times \nabla P_2)\big)\times\nabla P_l\right].\]
\end{enumerate}
\end{lemma}

\begin{proof}
For the first point, observe that 
\begin{align}
\nabla\times \left[Y_0\, \bar{\times} \frac{}{}\phi(\nabla P_1\times \nabla P_2)\right] \stackrel{\eqref{Prop-Calc-9}}{=}&\nabla\times \left[\left(\nabla P_1\bar{\times}(\nabla P_2\times D(H_0))\right)\bar{\times}\phi(\nabla P_1\times \nabla P_2)\right]
\nonumber
\\
\stackrel{\eqref{Prop-Calc-14}}{=}&\nabla\times \left[\left((\nabla P_2\times D(H_0))\cdot \phi(\nabla P_2\times \nabla P_1)\right)\nabla P_1\right]
\nonumber
\\
\stackrel{\eqref{Prop-Calc-8}}{=}&0.
\nonumber
\end{align}

Furthermore it is 
\begin{align}
&\mathrm{Div}\left(D(H_0) \, \bar{\times} \frac{}{} (\nabla P_1\times\nabla P_2)\right)\nabla P_1\times\nabla P_2
\nonumber
\\
\stackrel{\eqref{Prop-Calc-19}}{=}& \left[\left(\nabla P_1\times\nabla P_2\right)\cdot \phi(\nabla \times D(H_0)) - D(H_0)\cdot \frac{}{} \left( \underbrace{\nabla\bar{\times}(\nabla P_1\times\nabla P_2}_{=0})\right)\right]\nabla P_1\times \nabla P_2
\nonumber
\\
\stackrel{\eqref{Prop-Calc-2}}{=}&\left[\left(\nabla \times D(H_0)\right)\cdot \frac{}{} \phi(\nabla P_1\times P_2)\right]\nabla P_1\times \nabla P_2.
\nonumber
\end{align}
Thus we get the first claim by (\ref{d1-operatorc}).

\noindent For the second point, we have
\begin{align}
\nabla\times\big[Y_l\bar{\times}\phi(\nabla P_1\times\nabla P_2)\big]
\stackrel{\eqref{Prop-Calc-14}}{=}&\nabla\times\big[\big((\nabla\times D(H_l))\cdot \phi(\nabla P_1\times\nabla P_2)\big)\nabla P_l\big]
\nonumber
\\
\stackrel{\eqref{Prop-Calc-16}}{=}&\nabla\big[\big(\nabla\times D(H_l)\big)\cdot \phi(\nabla P_1\times P_2)\big]\times\nabla P_l.
\nonumber
\end{align}

Since $\nabla\times\nabla P_l=0$ and $\nabla\bar{\times}(\nabla\times D(H_l))=0$, one has
\[\mathrm{Div}\big(\nabla P_l \, \bar{\times} \frac{}{}(\nabla\times D(H_l))\big)\nabla P_1\times P_2\stackrel{\eqref{Prop-Calc-19}}{=}0,\]
and thus we get the second claim by (\ref{d1-operatorc}).
\end{proof} 

\begin{proposition}\label{finiteH2}
The formal Poisson cohomology group $H^2(U_C,\pi_C)$  is a free $\mathbb{R}[P_1,P_2]-$ module contained in the $\mathbb{R}[P_1,P_2]-$ module
\[\mathbb{R}[P_1,P_2]\bigg[\nu_kK^{-1}(\nabla P_1\times\nabla P_2)\oplus K^{-1}\big[\nabla(P_{1}  P_{2}  \nu_k)\times\nabla P_1\big]\oplus K^{-1}\big[\nabla(P_{1} P_{2}  \nu_k)\times\nabla P_2\big]\bigg].\]
\end{proposition}

\begin{proof} Let $G\in\mathrm{Ker}(\di^2)$. By  \cite[Prop. 3.4]{P09}, the unimodularity of Jacobian Poisson structures, and the fact that $D$ is an isomorphism, one then has
\[K(G)=\beta_0\nabla P_1\times\nabla P_2+\nabla \beta_1\times\nabla P_1+\nabla \beta_2\times\nabla P_2,\]
for some $\beta_l\in V$. Since $K$ is a  $V-$ linear isomorphism,
\begin{equation}\label{what is G}
G=\beta_0K^{-1}(\nabla P_1\times\nabla P_2)+K^{-1}(\nabla \beta_1\times\nabla P_1)+K^{-1}(\nabla \beta_2\times\nabla P_2).
 \end{equation} 
 
\noindent The idea of this proof is to compute $\mathrm{mod}\;\mathrm{Im}(\di^1)$, all the terms on the right hand side of (\ref{what is G}) and show that they are some finite sums.

\noindent We start by computing the first summand on the right hand side of (\ref{what is G}).  
 
\noindent In the same way as for (\ref{form}) let 
\[\beta_l=\frac{1}{4}\big(\nabla\times D(H_l)\big)\cdot \phi(\nabla P_1\times P_2)+\sum_{i=0}^{\mu_l} \sum_{j=0}^{\delta_l} \sum_{k=0}^{6}  \lambda_{i,j,k}^l P_{1}^{i}  P_{2}^{j}  \nu_k,\]

\noindent where $H_l\in\mathfrak{X}^3$. Then
\begin{align}
\beta_0K^{-1}(\nabla P_1\times\nabla P_2)=& \frac{1}{4}\big[\big(\nabla\times D(H_0)\big)\cdot \phi(\nabla P_1\times P_2)\big]K^{-1}(\nabla P_1\times\nabla P_2)
\nonumber
\\
& +\big[\sum_{i=0}^{\mu_0} \sum_{j=0}^{\delta_0} \sum_{k=0}^{6}  \lambda_{i,j,k}^0 P_{1}^{i}  P_{2}^{j}  \nu_k\big]K^{-1}(\nabla P_1\times\nabla P_2),
\nonumber
\end{align}

\noindent and so by Lemma \ref{sidecalc} (1), 
\begin{equation}\label{beta_0}
\beta_0K^{-1}(\nabla P_1\times\nabla P_2)\equiv \big[\sum_{i=0}^{\mu_0} \sum_{j=0}^{\delta_0} \sum_{k=0}^{6}  \lambda_{i,j,k}^0 P_{1}^{i}  P_{2}^{j}  \nu_k\big]K^{-1}(\nabla P_1\times\nabla P_2)\;\;\;\;\; \mathrm{mod}\;\mathrm{Im}(\di^1).
\end{equation}

\noindent We calculate now the second and third summand of \eqref{what is G}. One has that for $l=1,2$, 

\[\nabla\beta_l=\frac{1}{4}\nabla\large[\big(\nabla\times D(H_l)\big)\cdot \phi(\nabla P_1\times P_2)\large]+\sum_{i=0}^{\mu_l} \sum_{j=0}^{\delta_l} \sum_{k=0}^{6}  \lambda_{i,j,k}^l \nabla(P_{1}^{i}  P_{2}^{j}  \nu_k).\]
By Lemma  \ref{sidecalc} (2), we get
\begin{equation}\label{betal}
K^{-1}(\nabla\beta_l\times\nabla P_l)\equiv K^{-1}\big[\sum_{i=0}^{\mu_l} \sum_{j=0}^{\delta_l} \sum_{k=0}^{6}  \lambda_{i,j,k}^l \nabla(P_{1}^{i}  P_{2}^{j}  \nu_k)\times\nabla P_l\big] \;\;\;\;\;\mathrm{mod}\;\mathrm{Im}(\di^1).
\end{equation} 

The claim then follows from (\ref{what is G}), (\ref{beta_0}) and (\ref{betal}). \end{proof}

We now want to compute explicitly the generators of $H^2(U_C,\pi_C)$. For this, we will do a reduction procedure to eliminate $\mathbb{R}[P_1,P_2]-$ linearly dependent terms contained in the right hand sides of (\ref{beta_0}) and (\ref{betal}) (and thus  (\ref{what is G})).

\begin{proposition}\label{H2}
The formal Poisson cohomology group $H^2(U_C,\pi_C)$ is a free $\mathbb{R}[P_1,P_2]-$ module isomorphic to
\[ \left[\bigoplus_{k=1}^5\mathbb{R}[P_1,P_2]K^{-1}(\nabla\nu_k\times\nabla P_1)\right]\oplus \mathbb{R}[P_1,P_2]K^{-1}(\nabla P_1\times\nabla P_2)\]
\noindent where  $(\nu_i)_{1\leq i\leq 6}=(x_1, x_2, x_3, x_4, x_1x_2, x_3 x_4)$. 
\end{proposition}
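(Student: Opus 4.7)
The plan is to mirror the architecture of the proof of Proposition~\ref{H3}, swapping the pair $(\di^2,\di^3)$ for $(\di^1,\di^2)$. Let $W\in\ker(\di^2)$; by \eqref{d2-operatorc} the vanishing of $\di^2(W)$ is equivalent, after removing the invertible $D$, to
\[(\nabla\,\bar{\times}\, K(W))\,\bar{\times}\,\phi(\nabla P_1\times\nabla P_2)+\nabla\bigl(K(W)\cdot\phi(\nabla P_1\times\nabla P_2)\bigr)=0.\]
The first step is to invoke the one-degree-down analog of the structural result \cite[Prop.~3.2]{P09} used in the proof of Proposition~\ref{H3}, which yields a canonical decomposition
\[K(W)=\beta_0\,\nabla P_1\times\nabla P_2+\sum_i\nabla\alpha_i^{(1)}\times\nabla P_1+\sum_j\nabla\alpha_j^{(2)}\times\nabla P_2+R,\]
where $R$ is a gradient-type tail that will later be absorbed into $\mathrm{Im}(\di^1)$.

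Using Proposition~\ref{H4} I would then rewrite each scalar coefficient $\beta_0,\alpha_i^{(1)},\alpha_j^{(2)}$ as a sum $\sum_{i,j,k}\lambda_{ijk}P_1^iP_2^j\nu_k$ modulo $\mathrm{Im}(\di^3)$, with $\nu_k\in\{1,x_1,x_2,x_3,x_4,x_1x_2,x_3x_4\}$. Distributing this expansion produces $\mathrm{Cas}$-linear candidate generators of the three types $K^{-1}(\nabla\nu_k\times\nabla P_1)$, $K^{-1}(\nabla\nu_k\times\nabla P_2)$, and $K^{-1}(\nabla P_1\times\nabla P_2)$, with redundancies that must be cancelled against $\mathrm{Im}(\di^1)$.

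The reduction step is the analog of the cobounding construction used for $H^3$: I would compute $\di^1(Y)$ from \eqref{d1-operatorc} for vector fields $Y$ built in the spirit of $\overline{W}_i=K^{-1}(\phi(\nabla x_i\times E))$, now with $x_i$ replaced by the various $\nu_k$ and by the Casimirs. Applying the calculus identities \eqref{Prop-Calc-8}--\eqref{Prop-Calc-19} should yield explicit relations showing that, modulo $\mathrm{Im}(\di^1)$, every $K^{-1}(\nabla\nu_k\times\nabla P_2)$ is a $\mathrm{Cas}$-combination of terms from $K^{-1}(\nabla\nu_k\times\nabla P_1)$ and $K^{-1}(\nabla P_1\times\nabla P_2)$. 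Moreover, the identity $P_2=2(x_1x_2+x_3x_4)$ gives
\[\nabla(x_3x_4)\times\nabla P_1=\tfrac{1}{2}\nabla P_2\times\nabla P_1-\nabla(x_1x_2)\times\nabla P_1,\]
eliminating $\nu_6=x_3x_4$ in favour of $\nu_5=x_1x_2$ and the Casimir--Casimir generator, while $\nabla\nu_0=0$ automatically drops the $\nu_0$ index; together this accounts for the range $k=1,\ldots,5$ in the statement.

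The main obstacle will be the combinatorial bookkeeping: whereas Proposition~\ref{H3} reduces $\ker(\di^3)$ to a single scalar condition via the divergence form \eqref{d3-operatorc}, formula \eqref{d2-operatorc} has two genuinely different contributions, and \eqref{d1-operatorc} likewise has a $\mathrm{Div}$ term and a curl term. Producing enough explicit cobounding vector fields $Y$ to simultaneously cancel all surplus candidates requires systematic use of the involution $\phi$, the orthogonality identity \eqref{Prop-Calc-8}, and the Leibniz/divergence identities \eqref{Prop-Calc-15}--\eqref{Prop-Calc-19}. The final count of six generators matches the rank of the Poisson homology group $H_2$ from \cite{P09} under the unimodular $\star$-duality, which serves as an a priori upper bound guaranteeing that the reduction terminates with no further relations among the six listed generators.
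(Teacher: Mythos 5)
Your plan is essentially the paper's own proof: decompose $K(W)=\beta_0\,\nabla P_1\times\nabla P_2+\nabla\beta_1\times\nabla P_1+\nabla\beta_2\times\nabla P_2$ via unimodularity and Pelap's structure theorem (the relevant citation is Prop.~3.4 of \cite{P09}, and the decomposition has no extra tail $R$), expand the $\beta_l$ modulo $\mathrm{Im}(\di^3)$ using Proposition~\ref{H4}, cobound the surplus by explicit $1$-vector fields, and finally use $P_2=2(x_1x_2+x_3x_4)$ to eliminate $\nu_6$ and the $\nabla P_2$ column. The concrete cobounding fields the paper uses --- $H_0\,\bar{\times}\,(\nabla P_1\times\nabla P_2)$, $\nabla P_l\,\bar{\times}\,(\nabla\times H_l)$, the Casimir-scaled Euler fields $P_1^iP_2^j\nu_k E$, and the plain gradients $\nabla x_i$ --- carry out exactly the reduction you outline.
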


\begin{proof}

\noindent Consider the vector field    $Y:=\lambda_{ijk}^lP_1^iP_2^j\nu_kE\in\mathfrak{X}^1$. To compute $\di^1(Y)$ from \eqref{d1-operatorc},  one has
\begin{align}
\mathrm{Div}(Y)\nabla P_1\times\nabla P_2\stackrel{(\ref{Prop-Calc-18})}{=} & \lambda_{ikj}^l\big[(2i+2j+4)P_1^iP_2^j\nu_k+P_1^iP_2^j\nabla\nu_k\cdot E\big]\nabla P_1\times\nabla P_2\
\nonumber
\\
=& \big(2i+2j+4+\deg(\nu_k)\big)\lambda_{ijk}^lP_1^iP_2^j\nu_k\nabla P_1\times\nabla P_2. \label{mid1}
\end{align}
Furthermore,

\begin{align}
\nabla\times \big[Y\bar{\times}\phi(\nabla P_1\times\nabla P_2)\big] \stackrel{\eqref{Prop-Calc-11}}{=}&
\nabla\times\big[-\big(Y\cdot\nabla P_1\big)\nabla P_2+\big(Y\cdot\nabla P_2)\nabla P_1\big]
\nonumber
\\
=& \,
\nabla\times\big[-2\lambda_{ijk}^lP_1^{i+1}P_2^j\nu_k\nabla P_2+2\lambda_{ijk}^lP_1^iP_2^{j+1}\nu_k\nabla P_1\big]
\nonumber
\\
\stackrel{\eqref{Prop-Calc-16}}{=}&  2 \lambda_{ijk}^l\big[-\nabla(P_1^{i+1}P_2^j\nu_k)\times\nabla P_2+\nabla(P_1^iP_2^{j+1}\nu_k)\times\nabla P_1\big]
\nonumber
\\
=& 2 \lambda_{ijk}^l\big[-(i+1)P_1^iP_2^j\nu_k\nabla P_1\times\nabla P_2-P_1^{i+1}P_2^j\nabla\nu_k\times\nabla P_2
\nonumber
\\
& + (j+1)P_1^iP_2^j\nu_k\nabla P_2\times\nabla P_1+P_1^iP_2^{j+1}\nabla\nu_k\times\nabla P_1\big]
\nonumber
\\
=& - (2i+2j+4) \lambda_{ijk}^lP_1^iP_2^j\nu_k\nabla P_1\times\nabla P_2
\nonumber
\\
 & +   2 \lambda_{ijk}^l\big[P_1^iP_2^{j+1}\nabla\nu_k\times\nabla P_1-P_1^{i+1}P_2^j\nabla\nu_k\times\nabla P_2\big] \label{mid2}
\end{align}

\noindent Then, from (\ref{mid1}) and (\ref{mid2}), 
\begin{align}\label{morered}
\di^1(Y)=& \frac{1}{4}\lambda_{ijk}^l\deg(\nu_k)P_1^iP_2^j\nu_kK^{-1}(\nabla P_1\times\nabla P_2)
\nonumber
\\
& + \frac{1}{2}\lambda_{ijk}^l\big[P_1^iP_2^{j+1}K^{-1}(\nabla \nu_k\times\nabla P_1) - P_1^{i+1}P_2^jK^{-1}(\nabla\nu_k\times\nabla P_2)\big]
\end{align}

\noindent Equation (\ref{morered}) shows that in $H^2(U_C,\pi_C)$, the terms in the right hand side are linearly dependent. We eliminate the terms of the form $\mathbb{R}[P_1,P_2]\nu_kK^{-1}(\nabla P_1\times\nabla P_2)$ and so by (\ref{what is G}), (\ref{beta_0}),  (\ref{betal}) and (\ref{morered}), we get that $\mathrm{ker}(\di^2)=\mathrm{Im}(\di^1)+L_2$ where

\begin{equation}\label{l2}
L_2=\sum_{k=1}^6\mathbb{R}[P_1,P_2]K^{-1}(\nabla\nu_k\times\nabla P_1)+\sum_{k=1}^6\mathbb{R}[P_1,P_2]K^{-1}(\nabla\nu_k\times\nabla P_2)+\mathbb{R}[P_1,P_2]K^{-1}(\nabla P_1\times\nabla P_2).
\end{equation}

\noindent We eliminate more terms from $L_2$ as follows. Computing $\nabla\times\big[\nabla x_i\bar{\times} \phi(\nabla P_1\times\nabla P_2)\big]$ for $i=1,2,3,4$ we have respectively
\[\di^1(\nabla x_1)=\frac{1}{2}K^{-1}\big[\nabla x_2\times\nabla P_1-\nabla x_1\times\nabla P_2\big]\]
\[\di^1(\nabla x_2)=\frac{1}{2}K^{-1}\big[\nabla x_1\times\nabla P_1-\nabla x_2\times\nabla P_2\big]\]
\[\di^1(\nabla x_3)=\frac{1}{2}K^{-1}\big[\nabla x_4\times\nabla P_1-\nabla x_3\times\nabla P_2\big]\]
\[\di^1(\nabla x_4)=\frac{1}{2}K^{-1}\big[\nabla x_3\times\nabla P_1-\nabla x_4\times\nabla P_2\big].\]
Thus for $i=1,2,3,4$, the bivector $K^{-1}(\nabla x_i\times\nabla P_2)$ is written $\mathrm{mod\;Im}(\di^1)$ as linear combination of other elements in $L_2$ (\ref{l2}).

\noindent Since $\nabla x_{12}\times\nabla P_2=-\nabla x_{34}\times\nabla P_2$ and $\nabla P_1\times\nabla P_2=-\frac{1}{2}\nabla x_{12}\times\nabla P_1-\frac{1}{2}\nabla x_{34}\times\nabla P_1$ we get the claim. \end{proof}

%
%
\section{Poisson cohomology around singular circles}\label{section circles}
In this section we calculate the formal Poisson cohomology of the Poisson structure $\pi_{\Gamma_h}$ (\ref{eq:pi-gamma-with-k}) around the circles of fold singularities of a bLf. We restrict to the case where the function determining the conformal class is $h=1$ as in section \ref{points}, so we work with the linear model
\begin{equation}\label{eq:pi-gamma}
\pi_{\Gamma_1}=x_1 \frac{\partial}{\partial x_2} \wedge \frac{\partial}{\partial x_3} + x_2 \frac{\partial}{\partial x_1} \wedge \frac{\partial}{\partial x_3} - x_3 \frac{\partial}{\partial x_1} \wedge \frac{\partial}{\partial x_2},
\end{equation}
and drop the subscript $\Gamma$ henceforth.

On the normal bundle of a singular circle there is a splitting $\mathbb{R}^3 \times S^1 \rightarrow S^1$ into a rank 1-bundle and a rank 2-bundle over $S^1$. There are two possible splittings up to isotopy \cite{GSV}. One is orientable and the other non-orientable, where the former is given by the identity map and the latter is defined by the involution 
\begin{align}\label{involution}
\iota\colon S^1 \times D^3 &\rightarrow S^1 \times D^3
\\
(\theta, x_1, x_2, x_3) &\mapsto (\theta+ \pi, -x_1, x_2, -x_3).
\nonumber
\end{align}

\noindent The bivector field $\pi$ is invariant under $\iota$ and descends to the quotient of $S^1 \times B^3$ by the involution  for the non-orientable tubular neighbourhood \cite[Proposition 3.2]{GSV}.

\noindent For simplicity in the following calculations we rename $\theta=x_0$. Fix the volume form $\mathrm{vol}=\di x_0\wedge\di x_1\wedge \di x_2\wedge \di x_3$. A straightforward calculation shows that the modular vector field of $\pi$ vanishes identically.  Moreover, the extension of the Poisson structure to the regular parts is symplectic, thus the structure is unimodular everywhere and there exists a measure preserved by all Hamiltonian flows.

Let $\pi^\sharp: \Omega^1(\mathbb{R}^4)\longrightarrow \mathfrak{X}^1$ be again the contraction of $\pi$, i.e, $\pi^\sharp(\di x_i)(\di x_j)=\langle\pi,\di x_i\otimes\di x_j\rangle$. The Hamiltonian vector fields of the coordinate functions are
\begin{align}
\pi^\sharp(\di x_0)&=0,
\nonumber
\\
\pi^\sharp(\di x_1)&=x_2\partial_3-x_3\partial_2,
\nonumber
\\
\pi^\sharp(\di x_2)&=x_1\partial_3+x_3\partial_1,
\nonumber
\\
\pi^\sharp(\di x_3)&=-x_1\partial_2-x_2\partial_1.
\nonumber
\end{align}
For simplicity in notation we set $X_i:=\pi^\sharp(\di x_i)$.
\subsection{Description of the coboundary operator.} \label{equations d circles}
Let  $V=\mathbb{R}[x_0,x_1,x_2,x_3]$ be the algebra of polynomials in $x_0,x_1,x_2,x_3$. The Poisson bivector is
\[\pi=\frac{1}{2}\sum_{i=1}^3\partial_i\wedge X_i.\]
For $f\in V$,
\begin{equation}\label{d^0}
\di^0(f)=-\sum_{i=1}^3\partial_i(f)X_i=\sum_{i=1}^3 X_i(f)\partial_i.
\end{equation}
For $Y=\sum_{i=0}^3f_i\partial_i\in\mathfrak{X}^1$,

\begin{equation}\label{d^1}
\di^1(Y)= -\sum_{i=1}^3 X_i(f_0)\partial_{0i} +\sum_{i<j=1}^3
\left(X_i(f_j)-X_j(f_i)-(-1)^{[\frac{i+j}{2}]}f_k\right)\partial_{ij}
\end{equation}
where $[t]$ denotes the integral part of $t\in\mathbb{R}$, for example $[3.7]=[3]=3$ and the index $k$ is the index completing the triplet $\{i,j,k\}=\{1,2,3\}$ for chosen $i<j$.
Furthermore,  for $\displaystyle W=\sum_{i<j=0}^3f_{ij}\partial_{ij}\in\mathfrak{X}^2$,

\begin{equation}\label{d^2}
\di^2(W)=\sum_{i<j=1}^3\left (- X_i(f_{0j})+X_j(f_{0i})+(-1)^{[\frac{i+j}{2}]}f_{0k}\right)\partial_{0ij}
\end{equation}
\[-
\left(\sum_{i<j=1}^3(-1)^i X_i(f_{jk})\right)\partial_{123}\]
and finally, for $\displaystyle Z=\sum_{i<j<k=0}^3f_{ijk}
\partial_{ijk}\in\mathfrak{X}^3$,

\begin{equation}\label{d^3}
\di^3(Z)=\left[\sum_{i<j=1}^3(-1)^{k}X_k(f_{0ij})\right]\partial_{0123}.
\end{equation}

\subsection{Formal cohomology.}
 Let $V_i=\mathbb{R}_i[x_0,x_1,x_2,x_3]$ be the vector space of homogeneous polynomials of degree $i$ and $\mathfrak{X}^k_i$ be the space of $k$-vector fields whose coefficients are elements of $V_i$. Since $\pi$ is linear, when restricting to formal coefficients of $k-$ vector fields, one can decompose each term $\di^k:\mathfrak{X}^k\to\mathfrak{X}^{k+1}$ of section \ref{equations d circles} as $\di^k=\sum_{i=0}^\infty \di^k_i$ with $\di^k_i: \mathfrak{X}^{k}_i\rightarrow\mathfrak{X}^{k+1}_i$.

In terms of our notation for polyvector fields and their coefficient functions the operators $\di^\bullet_i$ fit in the sequence

\begin{equation}\label{seq}
0\longrightarrow V_i\stackrel{\di_i^0}{\longrightarrow} V_i^{\otimes 4}\stackrel{\di_i^1}{\longrightarrow} V_i^{\otimes 6}\stackrel{\di_i^2}{\longrightarrow} V_i^{\otimes 4}\stackrel{\di_i^3}{\longrightarrow} V_i\longrightarrow 0
\end{equation}
and more precisely

\begin{equation}\label{seqf}
f\stackrel{\di^0_i}{\longrightarrow} ( f_{0},f_{1},f_{2},f_{3}) \stackrel{\di^1_i}{\longrightarrow} ( f_{01},f_{02}, f_{03},f_{12},f_{13},f_{23}) \stackrel{\di^2_i}{\longrightarrow} ( f_{012},f_{013},f_{023},f_{123}) \stackrel{\di^3_i}{\longrightarrow} f_{0123}.
\end{equation}

One can check using the Jacobian form of $\pi$ that the functions
\begin{equation}\label{Q's}
Q_1(x_0,x_1,x_2,x_3)=x_0,\;\;\text{and}\;\;Q_2(x_0,x_1,x_2,x_3)=-x_1^2+x_2^2+x_3^2
\end{equation}
 parametrizing the singular locus, are the generators of the algebra of Casimir functions for $\pi$, which henceforth is denoted by $\mathbb{R}[Q_1,Q_2]$.
\begin{proposition}\label{cohomology circles}
Let $(U_{\Gamma}, \pi)$ the tubular neighbourhood of indefinite fold singularities of a bLf with Poisson bivector as in  \eqref{eq:pi-gamma}.  Let also $(\mathfrak{X}_{\textnormal{formal}}^{\bullet}(U_\Gamma), \di)$ be the Poisson cochain complex of multivector fields with formal coefficients.  The formal Poisson cohomology $H^{\bullet}_\mathrm{formal}(U_{\Gamma}, \pi)$ is given by the following list of free $\mathbb{R}[Q_1,Q_2]$-modules

\begin{itemize}
\item $H^0_\mathrm{formal}(U_{\Gamma}, \pi)=\mathbb{R}[Q_1,Q_2]$

\item  $H^1_\mathrm{formal}(U_{\Gamma}, \pi)=\mathbb{R}[Q_1,Q_2]\partial_0 $

\item  $H^2_\mathrm{formal}(U_{\Gamma}, \pi)=0$

\item  $H^3_\mathrm{formal}(U_{\Gamma}, \pi)=\mathbb{R}[Q_1,Q_2]\partial_{123} $

\item $H^4_\mathrm{formal}(U_{\Gamma}, \pi)=\mathbb{R}[Q_1,Q_2]\partial_{0123}$
\end{itemize}
\end{proposition}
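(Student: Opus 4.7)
The plan is to reduce the four-dimensional computation to a three-dimensional one by exploiting that $\pi_\Gamma$ contains neither $x_0$ nor $\partial_0$: the coordinate $x_0$ is a Casimir and the $(x_1,x_2,x_3)$-restriction of $\pi_\Gamma$,
\[
\pi_3 = x_1\partial_2\wedge\partial_3 + x_2\partial_1\wedge\partial_3 - x_3\partial_1\wedge\partial_2,
\]
is the Jacobian Poisson structure on $\mathbb{R}^3$ with Casimir $Q_2 = -x_1^2 + x_2^2 + x_3^2$. Since $Q_2$ is weight-homogeneous (weights $(1,1,1)$, degree $2$) with isolated Morse singularity and $\pi_3$ is unimodular (each Hamiltonian $X_i$ is divergence-free for the standard volume), Pichereau's theorem \cite{P06} delivers the cohomology of $\pi_3$. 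The task is then to assemble the four-dimensional answer from this three-dimensional one.

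Every $k$-vector field on $U_\Gamma$ decomposes uniquely as $W = W_0 + \partial_0\wedge W_1$ with $W_0 \in \mathfrak{X}^k$ and $W_1\in \mathfrak{X}^{k-1}$ carrying no $\partial_0$ component (their coefficients may depend on all four variables). Because $[\pi_\Gamma,\partial_0]_{\mathrm{SN}}=0$ and $\pi_\Gamma$ is $x_0$-independent, the graded Leibniz identity for the Schouten-Nijenhuis bracket yields
\[
\di_{\pi_\Gamma}(W_0 + \partial_0\wedge W_1) = \di_{\pi_\Gamma}(W_0) - \partial_0\wedge \di_{\pi_\Gamma}(W_1),
\]
and on multivectors without $\partial_0$, $\di_{\pi_\Gamma}$ acts as $\di_{\pi_3}$ in the variables $(x_1,x_2,x_3)$ with $x_0$ treated as a parameter. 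This is directly visible from the formulas (\ref{d^0})--(\ref{d^3}). Consequently the Lichnerowicz complex factors as a tensor product
\[
\bigl(\mathfrak{X}^\bullet_{\mathrm{formal}}(U_\Gamma),\di_{\pi_\Gamma}\bigr) \;\cong\; \bigl(\mathfrak{X}^\bullet_{\mathrm{formal}}(\mathbb{R}^3),\di_{\pi_3}\bigr) \;\otimes\; \bigl(\mathbb{R}[[x_0]]\otimes \Lambda^\bullet\langle\partial_0\rangle, 0\bigr),
\]
the second factor carrying zero differential.

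Since one factor has trivial differential, Künneth applies without higher Tor terms and gives
\[
H^k_{\mathrm{formal}}(U_\Gamma,\pi_\Gamma) \;\cong\; H^k_{\mathrm{formal}}(\mathbb{R}^3,\pi_3)[[x_0]] \;\oplus\; H^{k-1}_{\mathrm{formal}}(\mathbb{R}^3,\pi_3)[[x_0]]\cdot\partial_0.
\]
Feeding in Pichereau's output $H^0(\pi_3) = \mathbb{R}[[Q_2]]$, $H^1(\pi_3) = H^2(\pi_3) = 0$, $H^3(\pi_3) = \mathbb{R}[[Q_2]]\,\partial_{123}$, and identifying $\mathbb{R}[[x_0, Q_2]]$ with the Casimir algebra $\mathbb{R}[Q_1,Q_2]$ under $Q_1=x_0$, reproduces exactly the five free $\mathbb{R}[Q_1,Q_2]$-modules in the statement, with the distinguished generators $1$, $\partial_0$ (the class tangent to the circle), $\partial_{123}$, and $\partial_{0123}$.

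The main technical obstacle is justifying the vanishing $H^1(\pi_3) = H^2(\pi_3) = 0$: one must input the Morse data (weights $(1,1,1)$, degree $2$, Milnor algebra $\mathbb{R}[x_1,x_2,x_3]/(\nabla Q_2) = \mathbb{R}$) into Pichereau's degree-by-degree argument. Using the weight grading (\ref{seq})--(\ref{seqf}) and the regular-sequence property of $(Q_1,Q_2)$ noted after (\ref{Q's}), the argument reduces to verifying that each graded piece $\di^1_i$, $\di^2_i$ of the complex (\ref{seqf}) has the expected rank, so that all cycles in middle degree are boundaries. This is where the calculation lives; once the three-dimensional vanishing is in hand, the extension to four dimensions is essentially formal, carried entirely by the decomposition $W = W_0 + \partial_0 \wedge W_1$ above.
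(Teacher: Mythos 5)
Your proposal is correct and follows essentially the same route as the paper: both split off the $x_0$/$\partial_0$ direction (your decomposition $W=W_0+\partial_0\wedge W_1$ is exactly what underlies the paper's splittings $\di^1=d^1_1+d^1_2$, $\di^2=d^2_1+d^2_2$ with $d^1_1=\di^0$, $d^1_2=d^2_1$), reducing everything to the three-dimensional Jacobian structure of $\phi=\tfrac{1}{2}Q_2$ with $x_0$ as a parameter, and then invoke Pichereau's computation for a weight-homogeneous polynomial with isolated singularity. Your K\"unneth packaging treats all degrees uniformly, whereas the paper quotes \cite{P06} only for $H^1$ and $H^2$ and finishes $H^3$, $H^4$ by a dimension count, but the decomposition and the key external input are identical.
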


\begin{proof} 

Since $\pi$ is linear, computing the cohomology $H^k_i(U_\Gamma, \pi)$ with coefficient functions of fixed polynomial degree $i$ will determine the formal Poisson cohomology  $H^k_\mathrm{formal}(U_{\Gamma}, \pi)$ by replacing $V_i$ with $V_{\textrm{formal}}=\mathbb{R}[[x_0,x_1,x_2,x_3]]$.

 We thus prove our claim for fixed polynomial degree $i$. Consider the maps

\[(d^1_1)^i: V_i\longrightarrow V_i^{\otimes 3},\;\;\;(d^2_1)^i: V_i^{\otimes 3}\longrightarrow V_i^{\otimes 3}\]

\noindent defined by 

\[f_0\stackrel{(d^1_1)^i}{\longrightarrow} ( f_{01},f_{02},f_{03}) \stackrel{(d^2_1)^i}{\longrightarrow} ( f_{012},f_{013}, f_{023}),\]

\noindent and the maps 
\[(d^1_2)^i: V_i^{\otimes 3}\longrightarrow V_i^{\otimes 3},\;\;\;(d^2_2)^i: V_i^{\otimes 3}\longrightarrow V_i\]

\noindent  defined by 

\[( f_1,f_2,f_3)\stackrel{(d^1_2)^i}{\longrightarrow} ( f_{12},f_{13},f_{23}) \stackrel{(d^2_2)^i}{\longrightarrow} f_{123}.
\]

\noindent For simplicity, we keep the same notation for the maps induced by the $(d_l^k)^i$ between vector fields, i.e. 

\[ (d_1^1)^i, \;(d_2^1)^i:\; \mathfrak{X}^1_i(\mathbb{R}^4)\to\mathfrak{X}^2_i(\mathbb{R}^4)\;\;\textrm{and}\;\;(d_1^2)^i, \;(d_2^2)^i:\; \mathfrak{X}^2_i(\mathbb{R}^4)\to\mathfrak{X}^3_i(\mathbb{R}^4).\]

One can split $\di^1_i$  and $\di^2_i$ as 
\begin{equation}\label{d_1}
\di^1_i=(d^1_1)^i+(d^1_2)^i
\end{equation}
\begin{equation}
\di^2_i=(d^2_1)^i+(d^2_2)^i.
\end{equation}

\noindent Observe that by equations \eqref{d^0}--\eqref{d^3}, the following diagrams commute

\begin{center}

\begin{align}\label{obs2}
\xymatrix{ 
	                   						&    \qquad  \qquad f_1 \partial_1 + f_2 \partial_2 + f_3 \partial_3\ \qquad  \ar[dr]^{\bullet \, \wedge \partial_0}   &\\
f_0  \ar[ur]^{d_i^0} \ar[dr]_{\bullet \, \wedge \partial_0}   	&   &  \quad  -(f_1 \partial_{01} + f_2 \partial_{02} + f_3 \partial_{03})\\
									&  f_0 \partial_0  \ar[ur]_{(d_1^1)^{i}}  &  
}
\end{align}
\end{center}

\begin{center}
\begin{align}\label{obs1}
\xymatrix{ 
	                   						&    f_{12} \partial_{12} + f_{13} \partial_{13} + f_{23} \partial_{23}  \ar[dr]^{\bullet \, \wedge \partial_0}   &
\\
f_1 \partial_1 + f_2 \partial_2 + f_3\partial_3  \ar[ur]^{(d^1_2)^i} \ar[dr]_{\bullet \, \wedge \partial_0}   	&   &    f_{12} \partial_{012} + f_{13} \partial_{013} + f_{23} \partial_{023}. 
\\
									&  -(f_1 \partial_{01} + f_2 \partial_{02} + f_3 \partial_{03})  \ar[ur]_{(d_1^2)^{i}}  &  
}
\end{align}
\end{center}

Recall that $\mathrm{rank}(\pi)=2$ and that the algebra of Casimirs is generated by $Q_1, Q_2$. Let $k_i=\dim\mathbb{R}_i[Q_1,Q_2]$ be the dimension of the space of homogeneous Casimirs functions of degree $i$, and set $r_i=\dim V_i$. 

Since $X_0=0$, it is $\mathrm{Im}(\di^0_i)\subset\mathrm{ker}((d^1_2)^i)$. Using the splitting (\ref{d_1}) of $\di^1_i$ and  (\ref{obs2}) one has that 
\[H^1_i(U_\Gamma,\pi)=\ker((d^1_1)^i)\oplus\left(\ker((d^1_2)^i) /\mathrm{Im} (\di^0_i)\right).\]
Let $\mathcal{A}=\mathbb{R}[x_1,x_2,x_3]$ and $\phi=\frac{1}{2}Q_2$. The restriction of $\pi$ on $\mathcal{A}$ is then determined by $\phi$ in the sense that $\{x_{\sigma(i)},x_{\sigma(j)}\}=\partial_{\sigma(k)}\phi$ for every cyclic permutation $\sigma$ of $(1,2,3)$.  Denote this Poisson algebra by $(\mathcal{A},\pi_\phi)$. 

The $\partial_j\phi$ have only one common zero at the origin, the vertex of the cone defined by $\phi=0$, and for this, the Milnor number of $\mathcal{A}/\langle\partial_1\phi,\partial_2\phi,\partial_3\phi\rangle$ is finite and equal to $1$. Fixing the weight vector $\bar{\omega}=(1,1,1)$, $\phi$ is then weight homogeneous of weight $\bar{\omega}(\phi)=\deg(\phi)=2$ and has an isolated singularity.

From (\ref{d^0}),(\ref{d^1}), summing over all polynomial degrees $i$, we get that
\[\bigoplus_i\ker((d^1_2)^i) /\mathrm{Im} (\di^0_i)=\mathbb{R}[x_0]\otimes H^1(\mathcal{A},\phi),\]
where the second term on the right side is the first formal Poisson cohomology group of $(\mathcal{A},\pi_\phi)$. Let $E_{\bar{\omega}}=\sum_{r=1}^3 x_r\partial_r$ be the (weighted by $\bar{\omega}$) Euler vector field on $\mathbb{R}^3$. Since $\bar{\omega}(\phi)\neq\mathrm{Div}(E_{\bar{\omega}})=3$, by \cite[Proposition 4.5]{P06}, it is $H^1(\mathcal{A},\phi)=\{0\}$. Hence,

\[H^1_\mathrm{formal}(U_{\Gamma}, \pi)=\bigoplus_i\ker((d^1_1)^i)=\big[\bigoplus_i\ker(\di^0_i)\big]\partial_0\]
and we get our claim for $H^1_\mathrm{formal}(U_{\Gamma}, \pi)$.

\noindent By the splitting of $\di^1_i$, $\di^2_i$, and (\ref{obs2}), (\ref{obs1}), it is
\[H^2_\mathrm{formal}(U_\Gamma, \pi)=\bigoplus_i\ker((d^2_2)^i)/\mathrm{Im} ((d^1_2)^i)=\mathbb{R}[x_0]\otimes H^2(\mathcal{A},\phi).\]
\noindent Hence, by \cite[Proposition 4.8]{P06} we get our claim for $H^2_\mathrm{formal}(U_{\Gamma}, \pi)$. 

It is easy to see checking (\ref{d^3}) directly that $\dim(\mathrm{Im}(\di^3_i))=r_i-k_i$. By the result for $H^2_\mathrm{formal}(U_{\Gamma}, \pi)$ one has that $\dim(\mathrm{Im}(\di^2_i))=3r_i$, for all $i$, which gives the claim for $H^3_\mathrm{formal}(U_{\Gamma}, \pi)$.
Finally, $\dim H^4_i(U_\Gamma,\pi)=k_i$, which is  equal to the dimension of $\mathbb{R}_i[Q_1,Q_2]\simeq\mathbb{R}_i[Q_1,Q_2]\partial_{1234}$.
\end{proof}

\end{document}